\newcommand{\op}[1]{\prescript{o}{}{#1}}
\newcommand{\pp}[1]{\prescript{p}{}{#1}}
\def\graph{\mathop{\rm gph}}
\newcommand{\midb}{\;\middle|\;}
\newcommand{\one}{\mathbbm 1}
\def\reals{\mathbb{R}}
\def\ereals{\overline{\mathbb{R}}}
\def\comp{\raise 1pt \hbox{$\scriptstyle\circ$}}
\def\dom{\mathop{\rm dom}\nolimits}
\def\upto{{\raise 1pt \hbox{$\scriptstyle \,\nearrow\,$}}}
\def\downto{{\raise 1pt \hbox{$\scriptstyle \,\searrow\,$}}}
\def\epi{\mathop{\rm epi}}
\def\hypo{\mathop{\rm hypo}}
\def\FF{(\F_t)_{t\ge 0}}
\def\B{{\cal B}}
\def\cD{{\textrm(C)}}
\def\cP{{\textrm(C_p)}}
\def\F{{\cal F}}
\def\G{{\cal G}}
\def\H{{\cal H}}
\def\O{{\cal O}}
\def\P{{\cal P}}
\def\S{{\cal S}}
\def\T{{\cal T}}
\newtheorem{theorem}{Theorem}
\newtheorem{lemma}[theorem]{Lemma}
\newtheorem{corollary}[theorem]{Corollary}
\newtheorem{example}{Example}
\theoremstyle{definition}
\newtheorem{definition}{Definition}
\begin{document}
\title{Optional and predictable projections of normal integrands and convex-valued processes}
\author{Matti Kiiski\footnote{Aalto University, Department of Mathematics and Systems Analysis, P.O. Box 11100, FI-00076 Aalto, Finland ({\tt matti.kiiski@gmail.com})} \and Ari-Pekka Perkki\"o\footnote{Technische Universit\"at Berlin, Department of Mathematics, Building MA,
Str. des 17. Juni 136, 10623 Berlin, Germany ({\tt perkkioe@math.tu-berlin.de}). Corresponding author. The author is grateful to the Einstein Foundation for the financial support.}}
\maketitle

\begin{abstract}
This article studies optional and predictable projections of integrands and convex-valued stochastic processes. The existence and uniqueness are shown under general conditions that are analogous to those for conditional expectations of integrands and random sets. In the convex case, duality correspondences between the projections and projections of epigraphs are given. These results are used to study projections of set-valued integrands. Consistently with the general theory of stochastic processes, projections are not constructed using reference measures on the optional and predictable sigma-algebras.
\end{abstract}

\noindent\textbf{Key words.} Set-valued and variational analysis, normal integrand, set-valued integrand, set-valued stochastic process, optional and predictable projection, convex conjugate
\newline
\newline
\noindent\textbf{AMS subject classification codes.} 28B20, 46A20, 49J53, 52A20, 60G07

\section{Introduction}

Normal integrands, set-valued integrands, and set-valued mappings have proven to be fundamental concepts both in discrete and continuous time optimization. For applications to discrete time stochastic optimization and mathematical finance, see, e.g. \cite{rw83,pp10}. As to the continuous time, these concepts are important in optimal control and calculus of variations \cite{roc78,mor6b}. In stochastic optimal control, normal integrands are already used in \cite{bis73b} whereas set-valued integrands and set-valued mappings appear in stochastic differential inclusions and set-valued stochastic integrals \cite{kis13}. In mathematical finance, set-valued mappings are used to model, e.g., portfolio constraints \cite{cs12} and currency markets \cite{ks9}.

In discrete time, conditional expectations of normal integrands and random sets have received considerable attention \cite{bis73,cv77,thi81,tru91,chs3}. In stochastic optimization, the dynamic programming equations can be given in terms of conditional expectations of integrands \cite{evs76,ppr16}. For applications to ergodic theory and statistics, see \cite{chs3}. 

This article extends the analysis to the continuous time setting by studying optional and predictable projections of normal integrands and set-valued processes. We will define these projections so that they correspond to conditional expectation of integrands and random sets. When a set-valued process is single-valued or a normal integrand is simply a stochastic process, our definitions reduce to the ordinary optional and predictable projections. Thus our definitions are consistent with the general theory of stochastic processes. An application to stochastic optimal control is given in \cite{pp15a} whereas financial applications will be studied elsewhere. 

In Section \ref{sec:nisp}, we review the definitions and basic properties of normal integrands and set-valued processes; a systematic treatment can be found from \cite{rw98}. In Section~\ref{sec:st}, we prove versions of optional and predictable section theorems that involve graphs of optional and predictable set-valued processes. In Sections~\ref{sec:proj} and \ref{sec:projni}, we use these section theorems to prove the existence and uniqueness of optional and predictable projections of integrands. In particular, we give general conditions under which projections of normal integrands are again normal integrands. These are similar to those in \cite{chs3} where conditional expectations of normal integrands were studied. However, when specialized to conditional expectations of integrands on $\reals^d$, our results slightly extend both \cite[Theorem 2.1]{chs3} and \cite[Theorem 1.4]{tru91}; see Example~\ref{ex:ce}.

In Section~\ref{sec:svsp}, we prove the existence and uniqueness of optional and predictable projections of convex-valued stochastic processes. The methods in \cite{hu77} and \cite{wan01}, that deal with conditional expectations of general random sets, use a given probability measure on $\Omega$; here we do not have such measure on $\Omega\times\reals_+$ at our disposal. Instead, our method is based on conjugacy arguments and on our existence results for projections of integrands. We emphasize that we do not construct projections of set-valued processes as conditional expectations with respect to a given measure on $\Omega\times\reals_+$. This is the case, e.g., in the proof of \cite[Theorem 3.7]{wan01b}. 

We finish the article by applying our main results to study projections of set-valued integrands in Section~\ref{sec:svi}. We give existence and uniqueness results for such projections as corollaries of our main theorems. To this end, we assume that the integrands are convex-valued and that they have appropriate inner and outer semicontinuity properties.

\section{Normal integrands and set-valued processes}\label{sec:nisp}

We assume throughout that the filtered probability space $(\Omega,\F,\FF,P)$ satisfies the usual hypotheses, that is, $\FF$ is right continuous and $\F_0$ contains all the $P$-null sets. We denote the predictable and optional $\sigma$-algebras on $\Omega\times\reals_+$ by $\P$ and $\O$. The set of $[0,\infty]$-valued stopping times is denoted by $\T$ and its subset of predictable times by $\T_p$. We use the common notations $\F_\tau=\{A\in\F\mid A\cap\{\tau\le t\}\in\F_t\ \forall t\in\reals_+\}$ and $\F_{\tau-}=\F_0\vee\sigma\{A\cap\{t<\tau\}\mid A\in\F_t,\, t\in\reals_+\}$. Standard references for these basic concepts are, e.g., \cite{dm78,dm82,hwy92}.

Let $\S$ be a $\sigma$-algebra on $\Omega\times\reals_+$. We say that an extended real-valued function $h:\Omega\times\reals_+\times\reals^d\to\ereals$ is an {\em $\S$-integrand} on $\reals^d$ if it is $\S\otimes\B(\reals^d)$-measurable. We assume throughout that all integrands are $\F\otimes \B(\reals_+)$-integrands on $\reals^d$ unless stated otherwise. An integrand $h$ can be viewed as a measurable stochastic process $(\omega,t)\mapsto h_t(x,\omega)$ depending on the parameter $x\in\reals^d$, or, as a function-valued stochastic process.

Recall that a set on $\Omega\times\reals_+$ is {\em evanescent} if its projection to $\Omega$ is a $P$-null set. For integrands $h^1$ and $h^2$, we denote $h^1\le h^2$ if
\[
\{(\omega,t)\in\Omega\times\reals_+\mid \exists x\in\reals^d: h^1_t(x,\omega)>h^2_t(x,\omega)\}
\]
is evanescent and we say that $h^1$ and $h^2$ are {\em indistinguishable} if $h^1=h^2$. Later, we will not distinguish two indistinguishable integrands, i.e., they are regarded as the same. Accordingly, all properties of integrands are understood to be satisfied outside an evanescent set. For example, we say that an integrand $h$ is {\em convex} if $h_t(\cdot,\omega)$ is a convex function outside an evanescent set. Likewise, all equalities involving (at most countable number of) integrands are understood to hold everywhere outside some evanescent set. Note that, since $\F_0$ is complete, such evanescent set can always be assumed to be predictable. Indeed, for evanescent $E\subseteq\Omega\times\reals_+$, we have $E\subseteq\pi(E)\times\reals_+$ and $\pi(E)\times\reals_+\in\P$ by \cite[Theorem III.21]{hwy92}. Here $\pi$ denotes the projection from $\Omega\times\reals_+$ onto $\Omega$.

Throughout, for an integrand $h$ and an arbitrary $w:\Omega\times\reals_+\rightarrow\reals^d$, we denote by $h(w)$ the process $(\omega,t)\mapsto h_t(w_t(\omega),\omega)$. Note that $h(w)$ is $\S$-measurable whenever $h$ is an $\S$-integrand and $w$ is $\S$-measurable. For measurable $\phi:\Omega\rightarrow\ereals$, we use the convention that the expectation
\[
E \phi :=\int \phi(\omega)dP(\omega)
\] 
is $+\infty$ unless the positive part is integrable. In particular, sums of extended real numbers are defined as $+\infty$ if any of the terms equals $+\infty$.

We call set-valued mappings from $\Omega\times\reals_+$ to $\reals^d$ set-valued stochastic processes. A set-valued stochastic process $\Gamma$ is $\S$-{\em measurable} if the {\em inverse image} 
\[
\Gamma^{-1}(O)=\{(\omega,t)\in\Omega\times\reals_+ \mid \Gamma_t(\omega)\cap O\ne\emptyset\}
\]
of every open $O\subseteq \reals^d$ belongs to $\S$. In particular, the {\em domain mapping} 
\[
\dom \Gamma :=\{(\omega,t)\mid \Gamma_t(\omega)\neq \emptyset\}
\]
is $\S$-measurable whenever $\Gamma$ is so. The set
\[
\graph \Gamma:=\left\{(\omega,t,x)\in\Omega\times\reals_+\times\reals^d \midb x\in \Gamma_t(\omega)\right\}
\]
is known as the {\em graph} of $\Gamma$. Throughout, set-valued stochastic processes are considered to be equal if they coincide outside an evanescent set. We call a process $w$ a {\em selection} of $\Gamma$ if $w_t(\omega)\in\Gamma_t(\omega)$ outside an evanescent set. We denote the set of $\S$-measurable selections of $\Gamma$ by $L^0(\S;\Gamma)$. When $\S=\F\otimes\B(\reals_+)$, we write simply $L^0(\Gamma)$.

An extended real-valued function $h:\Omega\times\reals_+\times\reals^d\to\ereals$ is said to be a {\em normal $\S$-integrand} on $\reals^d$ if the {\em epi-graphical mapping}
\[
(\omega,t)\mapsto\epi h_t(\cdot,\omega)=\{(x,\alpha)\in \reals^d\times\reals|\, h_t(x,\omega)\leq\alpha\}
\]
is closed-valued and measurable. A normal $\S$-integrand $h$ is always $\S\otimes\B(\reals^d)$-measurable \cite[Corollary 14.34]{rw98}.

When $\S$ is complete with respect to some $\sigma$-finite measure, then an $\S$-integrand $h$ for which $h_t(\cdot,\omega)$ is lower semicontinuous for all $(\omega,t)$, is a normal $\S$-integrand \cite[Corollary 14.34]{rw98}. Some authors take these properties as the definition of a normal integrand, but since we will work with incomplete $\sigma$-algebras, we use the more precise concept given in terms of the measurability of the epigraphical mapping. Indeed, this is the case with, e.g., the optional and predictable $\sigma$-algebras; see, e.g., \cite{ran90}. 

\section{Optional and predictable section theorems}\label{sec:st}

For a set $A$, we denote $\one_A(x)=1$ if $x\in A$ and $\one_A(x)=0$ otherwise, whereas $\delta_A(x)=0$ if $x\in A$ and $\delta_A(x)=+\infty$ otherwise. For a set $A$ in $\Omega\times\reals_+$, we denote by $1_A$ the stochastic process $(1_A)_t(\omega)=\one_A(\omega,t)$. Motivated by the notion of $P$-discretely dense set (see \cite[Section 1.2]{tru91}), a set $\H$ of $\S$-measurable processes on $\Omega\times\reals_+$ is said to be {\em $\S$-discretely dense} if, for every $\S$-measurable process $w$, there exists an $\S$-measurable covering $(A^\nu)_{\nu=1}^\infty$ of $\Omega\times\reals_+$ and a sequence $(w^\nu)_{\nu=1}^\infty$ in $\H$ such that $1_{A^\nu}w=1_{A^\nu}w^\nu$ for every $\nu$. For example, the set of bounded optional processes is $\O$-discretely dense. Recall that a stochastic process $w$ is said to be {\em bounded} if there is an $M>0$ such that $|w_t(\omega)|\le M$ outside an evanescent set.

The following section theorems for set-valued processes will play an important role. They reduce to the optional and predictable section theorems (see e.g. \cite{dm78,hwy92}) in the special case when $\Gamma=\reals^d$ on $\dom\Gamma$.  For $\sigma:\Omega\rightarrow \reals_+\cup\{+\infty\}$, we denote $\graph{\sigma}=\{(\omega,t)\in\Omega\times\reals_+\mid \sigma(\omega)=t\}$. We denote by $\pi$ the projection from $\Omega\times\reals_+$ to $\Omega$.

\begin{theorem}\label{thm:ost}
Assume that $\graph{\Gamma}$ of a set-valued process $\Gamma$ is $\O\otimes\B(\reals^d)$-measurable and that $\H$ is an $\O$-discretely dense set. For any $\epsilon>0$, there exists $\tau\in\T$ and $w\in\H$ such that 
\begin{align*}
\graph{\tau}&\subseteq\dom\Gamma,\\
w_\tau&\in\Gamma_\tau\text{ on }\{\tau<\infty\},\\
P(\{\tau<\infty\})&\ge P(\pi(\dom\Gamma))-\epsilon.
\end{align*}
\end{theorem}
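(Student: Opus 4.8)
The plan is to reduce the assertion to the classical optional section theorem (see \cite{dm78,hwy92}), absorbing the set-valued features into the construction of a single optional set to which that theorem applies. First I would record that the right-hand side is meaningful: since $\graph\Gamma\in\O\otimes\B(\reals^d)$ and $\O\subseteq\F\otimes\B(\reals_+)$ with $\F$ being $P$-complete, the measurable projection theorem gives that the projection of $\graph\Gamma$ onto $\Omega$, which is exactly $\pi(\dom\Gamma)$, belongs to $\F$. The key device is that every $w'\in\H$ produces an \emph{optional} set $B_{w'}:=\{(\omega,t)\mid w'_t(\omega)\in\Gamma_t(\omega)\}$, because $(\omega,t)\mapsto(\omega,t,w'_t(\omega))$ is $\O$-to-$\O\otimes\B(\reals^d)$ measurable and $\graph\Gamma\in\O\otimes\B(\reals^d)$; on $B_{w'}$ the process $w'$ is a selection of $\Gamma$. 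This sidesteps the absence of a measurable selection theorem for the incomplete optional $\sigma$-algebra.

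Next I would show that the family $\{B_{w'}\}_{w'\in\H}$ exhausts $\dom\Gamma$ up to its projection. For $(\omega,t)\in\dom\Gamma$ pick $x\in\Gamma_t(\omega)$; the constant process $x$ is optional, so by $\O$-discrete density there is a covering $(A^\nu)$ and $(x^\nu)\subseteq\H$ with $x=x^\nu$ on $A^\nu$, whence $(\omega,t)\in B_{x^{\nu_0}}$ for the index with $(\omega,t)\in A^{\nu_0}$. Combined with the measurable projection theorem and a maximal-exhaustion argument, this yields a countable subfamily $\{w'_k\}\subseteq\H$ with $B:=\bigcup_k B_{w'_k}\in\O$, $B\subseteq\dom\Gamma$, and $P(\pi B)=P(\pi(\dom\Gamma))$.

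Applying the classical optional section theorem to $B$ then produces $\sigma\in\T$ with $\graph\sigma\subseteq B\subseteq\dom\Gamma$ and $P(\{\sigma<\infty\})\ge P(\pi B)-\epsilon/2$. Writing $B^k:=B_{w'_k}\setminus\bigcup_{j<k}B_{w'_j}$ and keeping finitely many pieces, I would retain an optional set carrying probability at least $P(\pi(\dom\Gamma))-\epsilon$ along $\graph\sigma$ and restrict $\sigma$ to a stopping time $\tau$ accordingly (that $\tau$ remains a stopping time follows since $\{(\omega,\sigma(\omega))\in B^k\}\in\F_\sigma$, the optional $\one_{B^k}$ being progressive). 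On each retained piece one of $w'_1,\dots,w'_N$ selects $\Gamma$, so the finite splice $w^\sharp:=\sum_{k\le N}\one_{B^k}w'_k$ is an optional selection along $\graph\tau$.

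The step I expect to be the crux is producing a \emph{single} $w\in\H$ out of $w^\sharp$. Along the thin graph $\graph\tau$ one applies $\O$-discrete density to $w^\sharp$ and matches it by members of $\H$ on a covering, but no single member need account for a large-probability portion unless $\H$ is stable under finite concatenation by optional sets — precisely the property enjoyed by the motivating example of bounded optional processes, for which the finite splice $w^\sharp$ already lies in $\H$. I therefore expect the single-$w$ conclusion to rest on this finite-stability feature of the relevant $\H$, with the measure bookkeeping above absorbing the loss into $\epsilon$. The remaining verifications — that all exceptional sets are evanescent and may be taken predictable via \cite[Theorem III.21]{hwy92} — are routine given the section theorem.
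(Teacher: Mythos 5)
Your reduction has a genuine gap at the ``maximal-exhaustion'' step. The sets $B_{w'}=\{(\omega,t)\mid w'_t(\omega)\in\Gamma_t(\omega)\}$ are indeed optional and cover $\dom\Gamma$ pointwise (your constant-process argument shows each point of $\dom\Gamma$ lies in some $B_{w'}$), but this is coverage by an \emph{uncountable} family, and pointwise coverage by uncountably many measurable sets does not yield a countable subfamily whose union has almost-full projection measure --- think of a set of positive measure covered by uncountably many null sets. The usual exhaustion argument only shows that the union $B^*=\bigcup_k B_{w'_k}$ of a maximizing countable subfamily absorbs every individual $B_{w'}$ up to null projection; it does not give $P(\pi(B^*))=P(\pi(\dom\Gamma))$, since $\dom\Gamma$ is only contained in the uncountable union. (Note also that $\dom\Gamma$ is merely $\O$-analytic rather than optional, which is why some regularization is needed before the section theorem can even be applied to it.) The paper closes exactly this hole by different means: it first takes a measurable section $\sigma$ of $\dom\Gamma$, uses the capacity $I(C)=\inf\{E[(1_B)_\sigma\one_{\{\sigma<\infty\}}]\mid C\subseteq B,\ B\in\O\}$ to find an optional $A\subseteq\dom\Gamma$ of almost full mass, sections $A$ by a stopping time $\tilde\tau$, and then invokes the von Neumann--Aumann measurable selection theorem with respect to the $\mu$-completion of $\O$, where $\mu(F)=E[(1_F)_{\tilde\tau}\one_{\{\tilde\tau<\infty\}}]$ is a finite measure carried by $\graph{\tilde\tau}$; the completion is then stripped off on a set of full $\mu$-measure to get a single $\O$-measurable near-selection $\check w$. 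Your proposal has no substitute for this selection step.

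Your closing worry about needing $\H$ to be stable under finite concatenation is a symptom of the same problem, but it is resolved in the paper without any extra hypothesis: $\O$-discrete density is applied to the \emph{single} process $\check w$, producing a countable covering $(A^\nu)$ and processes $w^\nu\in\H$, and countable additivity of the finite measure $\mu$ concentrated on $\graph{\tilde\tau}$ lets one retain a single piece $A^\nu$ carrying all but $\epsilon/4$ of the mass; a final application of the optional section theorem to $A^\nu\cap\tilde C$ then yields $\tau$ and the single $w=w^\nu\in\H$. Adding concatenation stability as a hypothesis would prove a strictly weaker statement than the one claimed. The ingredients of your sketch that do work --- optionality of $B_{w'}$, the measurable projection theorem, the optional section theorem, and cutting a stopping time down to an optional set --- all appear in the paper too, but the selection machinery (measurable section of $\dom\Gamma$, capacitability, and measurable selection under a completed $\sigma$-algebra) is the substance of the proof and is missing from your argument.
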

\begin{proof}
By \cite[Theorem III.44]{dm78}, there is a measurable $\sigma:\Omega\rightarrow \reals_+\cup\{+\infty\}$ such that $P(\{\sigma<\infty\})=P(\pi(\dom\Gamma))$ and $\graph{\sigma}\subseteq \dom\Gamma$. We define a set function on $\Omega\times\reals_+$ by
\[
I(C)=\inf_B\left\{ E\left[(1_B)_\sigma\one_{\{\sigma<\infty\}}\right]\midb C\subset B, B\in \O\right\}.
\]
It is easy to verify that $B\mapsto E\left[(1_B)_\sigma\one_{\{\sigma<\infty\}}\right]$ is a measure on $\O$, so $I$ is an $\O$-capacity, by \cite[Remark~2.8]{et13}. By \cite[Theorem~1.32 and Theorem~1.35]{hwy92}, $\dom\Gamma$ is $\O$-capacitable (see \cite[Definition 1.33]{hwy92}), so there is an $A\in\O$ with $A\subset\dom\Gamma$ and $I(A)\ge I(\dom\Gamma)-\epsilon/4$, i.e.,  
\[
E \left[(1_A)_\sigma\one_{\{\sigma<\infty\}}\right]\ge P(\pi(\dom\Gamma))-\epsilon/4.
\]
By the optional section theorem \cite[Theorem IV.84]{dm78}, there exists $\tilde\tau\in\T$ such that $P(\{\tilde\tau<\infty\})\ge P(\pi(A))-\epsilon/4$ and $\graph{\tilde\tau}\subseteq A$.

Let $\mu$ be a measure on $\Omega\times\reals_+$ defined by $\mu(F)=E[(1_F)_{\tilde\tau} \one_{\{\tilde\tau<\infty\}}]$. The set $\graph{\Gamma}\cap (A\times\reals^d)$ is $\O\otimes\B(\reals^d)$-measurable, so, by a measurable selection theorem \cite[Theorem III.22]{cv77}, there exists $\O^\mu$-measurable $\hat w$ such that $\hat w_{\tilde\tau}\in\Gamma_{\tilde\tau}$ on $A$, where $\O^\mu$ is the $\mu$-completion of $\O$. Therefore, there exists $\O$-measurable $\check w$ and  $\tilde C\in\O$ such that $\check w=\hat w$ on $\tilde C$, $\tilde C\subset A$, and such that $\mu(\tilde C)=\mu(A)$. Here $\mu(A)=P(\{\tilde\tau<\infty\})$. Since $\mu$ is countably additive and bounded, and since the sequence $(A^\nu)_{\nu=1}^\infty$ in the definition of $\O$-discretely dense can be chosen increasing, there exists $A^\nu\in\O$ and $w^\nu\in\H$ such that $1_{A^\nu}\check w=1_{A^\nu}w^\nu$ and $\mu(A^\nu\cap\tilde C)\ge\mu(\tilde C)-\epsilon/4$. We denote $w=w^\nu$.

By the optional section theorem, there exists $\tau\in\T$ such that $\graph{\tau}\subset (A^\nu\cap\tilde C)$ and $P(\{\tau<\infty\})\ge P(\pi(A^\nu\cap\tilde C))-\epsilon/4$. We have $\graph{\tau}\subseteq\dom\Gamma$, $w_\tau\in\Gamma_\tau$ on $\{\tau<\infty\}$ and
\begin{align*}
P(\{\tau<\infty\})&\ge P(\pi(A^\nu\cap\tilde C))-\epsilon/4 \ge \mu(\tilde C)-2\epsilon/4\\
&\ge P(\pi(A))-3\epsilon/4 \ge E \left[(1_A)_{\sigma}\one_{\{\sigma<\infty\}}\right] -3\epsilon/4\\
&\ge P(\pi(\dom\Gamma))-\epsilon. 
\end{align*}
\end{proof}
 
\begin{theorem}\label{thm:pst}
Assume that $\graph{\Gamma}$ of a set-valued process $\Gamma$ is $\P\otimes\B(\reals^d)$-measurable and that $\H$ is a $\P$-discretely dense set. For any $\epsilon>0$, there exists $\tau\in\T_p$ and $w\in\H$ such that 
\begin{align*}
\graph{\tau}&\subseteq\dom\Gamma,\\
w_\tau&\in\Gamma_\tau\text{ on }\{\tau<\infty\}\\
P(\{\tau<\infty\})&\ge P(\pi(\dom\Gamma))-\epsilon.
\end{align*}
\end{theorem}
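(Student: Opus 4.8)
The plan is to mirror the proof of Theorem~\ref{thm:ost} step by step, replacing every optional object with its predictable counterpart. First I would invoke \cite[Theorem III.44]{dm78} to obtain a measurable $\sigma:\Omega\rightarrow\reals_+\cup\{+\infty\}$ with $\graph{\sigma}\subseteq\dom\Gamma$ and $P(\{\sigma<\infty\})=P(\pi(\dom\Gamma))$; note that this step produces only a measurable function, not a stopping time, so it is insensitive to the optional/predictable distinction and can be reused verbatim. Using the finite measures $B\mapsto E[(1_B)_\sigma\one_{\{\sigma<\infty\}}]$ on $\P$, I would form the set function
\[
I(C)=\inf_B\left\{E\left[(1_B)_\sigma\one_{\{\sigma<\infty\}}\right]\midb C\subseteq B,\ B\in\P\right\},
\]
which is a $\P$-capacity by \cite[Remark~2.8]{et13}.

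Since $\graph{\Gamma}$ is $\P\otimes\B(\reals^d)$-measurable, its projection $\dom\Gamma$ is $\P$-analytic and hence $\P$-capacitable by \cite[Theorem~1.32 and Theorem~1.35]{hwy92}; this yields $A\in\P$ with $A\subseteq\dom\Gamma$ and $E[(1_A)_\sigma\one_{\{\sigma<\infty\}}]=I(A)\ge P(\pi(\dom\Gamma))-\epsilon/4$. At this point I would apply the \emph{predictable} section theorem \cite[Theorem IV.85]{dm78}, in place of its optional version, to produce $\tilde\tau\in\T_p$ with $\graph{\tilde\tau}\subseteq A$ and $P(\{\tilde\tau<\infty\})\ge P(\pi(A))-\epsilon/4$.

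The remaining steps are identical to those in Theorem~\ref{thm:ost}. Setting $\mu(F)=E[(1_F)_{\tilde\tau}\one_{\{\tilde\tau<\infty\}}]$, I would use the measurable selection theorem \cite[Theorem III.22]{cv77}, which applies to the $\mu$-completion $\P^\mu$ exactly as to $\O^\mu$, to obtain a selection $\hat w$ of $\Gamma$ along $\tilde\tau$ on $A$; then approximate $\hat w$ by a $\P$-measurable $\check w$ agreeing with it on a set $\tilde C$ of full $\mu$-measure; and finally invoke the $\P$-discrete density of $\H$ to replace $\check w$ by some $w=w^\nu\in\H$ on a set $A^\nu\in\P$ with $\mu(A^\nu\cap\tilde C)\ge\mu(\tilde C)-\epsilon/4$. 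A last application of the predictable section theorem gives $\tau\in\T_p$ with $\graph{\tau}\subseteq A^\nu\cap\tilde C$, and the chain of probability estimates is copied verbatim to conclude $P(\{\tau<\infty\})\ge P(\pi(\dom\Gamma))-\epsilon$.

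The only genuine issue is to confirm that each classical ingredient has a predictable incarnation: I expect the predictable section theorem and the $\P$-capacitability of $\dom\Gamma$ to be the load-bearing facts, and I would verify carefully that invoking the section theorem at every stage returns a \emph{predictable} time, so that the final $\tau$ indeed lies in $\T_p$. Beyond this bookkeeping the argument is a transcription, since the capacity construction, the selection theorem, and the discrete-density approximation are all insensitive to which of the two $\sigma$-algebras is in play.
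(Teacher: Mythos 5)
Your proposal is correct and follows exactly the route the paper takes: the paper's proof of this theorem is literally the one-line remark that it is proved like Theorem~\ref{thm:ost} with the predictable cross section theorem \cite[Theorem IV.85]{dm78} substituted for the optional one, which is precisely the transcription you carry out. Your explicit verification that each ingredient (the capacity argument, capacitability, the selection theorem, and discrete density) is insensitive to the choice of $\sigma$-algebra is the right bookkeeping and matches the paper's intent.
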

\begin{proof}
This can be proved like Theorem~\ref{thm:ost} by invoking the predictable cross section theorem \cite[Theorem IV.85]{dm78} instead of the optional cross section theorem.
\end{proof}

Given set-valued processes $\Gamma$ and $\tilde\Gamma$, $\Gamma$ is said to be smaller than $\tilde\Gamma$ if $\Gamma_t(\omega)\subseteq\tilde\Gamma_t(\omega)$ outside an evanescent set, in which case we denote $\Gamma\subseteq\tilde\Gamma$. For set-valued mappings $\tilde S$ and $S$ from $\Omega$ to $\reals^d$, we denote $\tilde S\subseteq S$ if $\tilde S(\omega)\subseteq S(\omega)$ almost surely. Such mappings are called {\em random sets}. It follows from the Castaing representation \cite[Theorem 14.5]{rw98}, that if $\Gamma$ is optional, then $\Gamma_\tau$ is an $\F_\tau$-measurable random closed set for each $\tau\in\T$. Likewise, if $\Gamma$ is predictable, then $\Gamma_\tau$ is $\F_{\tau-}$-measurable random closed set for each $\tau\in\T_p$.

The following result gives the set-valued analog of the fundamental result (see, e.g., \cite[Theorem 4.10]{hwy92}) that optional (resp. predictable) stochastic processes $v^1$ and $v^2$ satisfy $v^1\le v^2$ if and only $v^1_\tau \le v^2_\tau$ almost surely for every bounded $\tau\in\T$ (resp. for every bounded $\tau\in\T_p$). A stopping time $\tau$ is said to be {\em bounded} if there is an $M>0$ such that $\tau(\omega)\le M$ almost surely.
\begin{lemma}\label{lem:inc}
Let $\tilde\Gamma$ and $\Gamma$ be optional set-valued processes. Then $\tilde\Gamma\subseteq\Gamma$ if and only if $\tilde\Gamma_\tau\subseteq\Gamma_\tau$  for every bounded $\tau\in\T$. If $\tilde\Gamma$ and $\Gamma$ are predictable, then it is sufficient that $\tilde\Gamma_\tau\subseteq \Gamma_\tau$ for every bounded $\tau\in\T_p$.
\end{lemma}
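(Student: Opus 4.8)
The plan is to treat the equivalence as two separate implications. The forward implication is immediate from the definition of evanescence, and the whole content of the lemma lies in the converse, which I would establish by contradiction using the optional section theorem (Theorem~\ref{thm:ost}).

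For the forward implication, set $\Delta_t(\omega):=\tilde\Gamma_t(\omega)\setminus\Gamma_t(\omega)$. By definition $\tilde\Gamma\subseteq\Gamma$ means that $\dom\Delta=\{(\omega,t)\mid\tilde\Gamma_t(\omega)\not\subseteq\Gamma_t(\omega)\}$ is evanescent, so $\pi(\dom\Delta)$ is $P$-null. For any $\tau\in\T$ the exceptional set $\{\omega\mid\tilde\Gamma_\tau(\omega)\not\subseteq\Gamma_\tau(\omega)\}$ equals $\{\omega\mid(\omega,\tau(\omega))\in\dom\Delta\}\subseteq\pi(\dom\Delta)$, hence is $P$-null, giving $\tilde\Gamma_\tau\subseteq\Gamma_\tau$ almost surely. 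I would note that this direction needs no boundedness of $\tau$.

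For the converse I would first record that $\Delta$ has an $\O\otimes\B(\reals^d)$-measurable graph: since $\tilde\Gamma$ and $\Gamma$ are optional closed-valued processes, their graphs lie in $\O\otimes\B(\reals^d)$ (via Castaing representations), and $\graph\Delta=\graph\tilde\Gamma\setminus\graph\Gamma$. Although $\Delta$ need not be closed-valued, Theorem~\ref{thm:ost} only requires measurability of the graph, so it applies. Suppose, contrary to the claim, that $\dom\Delta$ is not evanescent, i.e. $P(\pi(\dom\Delta))>0$. Truncating in time by $\graph\Delta^n:=\graph\Delta\cap(\Omega\times[0,n]\times\reals^d)$, we have $\pi(\dom\Delta^n)\upto\pi(\dom\Delta)$, so $P(\pi(\dom\Delta^n))>0$ for some $n$. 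Applying Theorem~\ref{thm:ost} to $\Delta^n$, with $\H$ the ($\O$-discretely dense) family of bounded optional processes and $\epsilon:=P(\pi(\dom\Delta^n))/2$, yields $\tau\in\T$ and $w\in\H$ with $\graph\tau\subseteq\dom\Delta^n$, $w_\tau\in\Delta^n_\tau$ on $\{\tau<\infty\}$, and $P(\{\tau<\infty\})\ge P(\pi(\dom\Delta^n))/2>0$.

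The main obstacle is that the section theorem produces only a $[0,\infty]$-valued $\tau$, whereas the hypothesis concerns bounded stopping times; this is exactly what the time-truncation resolves. Since $\graph\tau\subseteq\dom\Delta^n\subseteq\Omega\times[0,n]$, the stopping time $\tau':=\tau\wedge(n+1)$ is bounded and agrees with $\tau$ on $\{\tau<\infty\}$. There $w_{\tau'}=w_\tau\in\Delta_{\tau'}=\tilde\Gamma_{\tau'}\setminus\Gamma_{\tau'}$, so $\tilde\Gamma_{\tau'}\not\subseteq\Gamma_{\tau'}$ on a set of positive probability, contradicting the hypothesis $\tilde\Gamma_{\tau'}\subseteq\Gamma_{\tau'}$ almost surely. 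Hence $\dom\Delta$ is evanescent, i.e. $\tilde\Gamma\subseteq\Gamma$. The predictable case is identical, using the predictable section theorem (Theorem~\ref{thm:pst}), the bounded predictable processes as a $\P$-discretely dense family, and noting that the truncated time $\tau'$ then lies in $\T_p$.
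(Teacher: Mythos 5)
Your proof is correct and follows essentially the same route as the paper: apply the optional (resp.\ predictable) section theorem to the difference process $\tilde\Gamma\setminus\Gamma$, whose graph is $\O\otimes\B(\reals^d)$-measurable, and derive a contradiction at a bounded stopping time. The only (harmless) difference is that you truncate in time before invoking the section theorem, whereas the paper obtains $\tilde\tau\in\T$ first and then passes to $\tilde\tau\wedge M$ for $M$ large enough; your variant makes the boundedness step slightly more explicit but is the same idea.
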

\begin{proof}
The necessity is obvious. Conversely, assume for a contradiction that $\tilde\Gamma$ is not smaller that $\Gamma$, i.e., the domain of $\tilde\Gamma\backslash\Gamma$ is nonevanescent. Since
\[
\graph{(\tilde\Gamma\backslash\Gamma)}=\graph{\tilde\Gamma}\backslash\graph\Gamma 
\]
is $\O\otimes\B(\reals^d)$-measurable, there exists, by Theorem~\ref{thm:ost}, $\tilde\tau\in\T$ and optional process $w$ such that $P(\{\tilde\tau<\infty\})>0$, $w_{\tilde\tau}\in\tilde\Gamma_{\tilde\tau}$ on $\{\tilde\tau<\infty\}$, and $w_{\tilde\tau}\notin\Gamma_{\tilde\tau}$ on $\{\tilde\tau<\infty\}$. Defining $\tau:=\tilde\tau\wedge M$ for $M$ large enough, we get a contradiction. The latter claim is proved similarly using Theorem~\ref{thm:pst}.
\end{proof}

\section{Projections of integrands}\label{sec:proj}

If an $\reals^d$-valued stochastic process $w$ is {\em $\T$-integrable} in the sense that $\one_{\{\tau<\infty\}}|w_\tau|$ is integrable for every $\tau\in\T$, then there exists an $\reals^d$-valued optional process $\op w$ such that
\begin{align*}
\op{w}_\tau\one_{\{\tau<\infty\}}&=E\left[w_\tau\one_{\{\tau<\infty\}}\mid \F_\tau\right]\quad P\text{-a.s. for all $\tau\in\T$}
\end{align*}
which is unique up to indistinguishability \cite[Theorems 5.1]{hwy92}. Similarly, if an $\reals^d$-valued stochastic process $w$ is {\em $\T_p$-integrable} in the sense that $\one_{\{\tau<\infty\}}|w_\tau|$ is integrable for every $\tau\in\T_p$, then there exists an $\reals^d$-valued predictable process $\pp w$ such that
\begin{align*}
\pp{w}_\tau\one_{\{\tau<\infty\}}&=E\left[w_\tau\one_{\{\tau<\infty\}}\mid \F_{\tau-}\right]\quad P\text{-a.s. for all $\tau\in\T_p$}
\end{align*}
which is unique up to indistinguishability \cite[Theorems 5.3]{hwy92}.  The processes $\op{w}$ and $\pp w$ are known as the {\em optional projection} and the {\em predictable projection} of $v$, respectively.

Likewise, when $v$ is an extended real-valued nonnegative stochastic process, there exists an optional process $\op v$ and a predictable process $\pp v$ such that
\begin{align*}
\op{v}_\tau\one_{\{\tau<\infty\}}&=E\left[v_\tau\one_{\{\tau<\infty\}}\mid \F_\tau\right]\quad P\text{-a.s. for all $\tau\in\T$},\\
\pp{v}_\tau\one_{\{\tau<\infty\}}&=E\left[v_\tau\one_{\{\tau<\infty\}}\mid \F_{\tau-}\right]\quad P\text{-a.s. for all $\tau\in\T_p$},
\end{align*}
which are unique up to indistinguishability. For nonnegative real-valued processes, this is a classical fact \cite[Theorem VI.43]{dm82} which extends to the extended nonnegative real-valued case using the monotone convergence theorem. For an extended real-valued stochastic process $v$ such that $v^+$ or $v^-$ is $\T$-integrable, we set $\op v= \op (v^+) - \op (v^-)$, where $v^+=\max\{v,0\}$ and $v^-=\max\{-v,0\}$. Correspondingly, for an extended real-valued stochastic process $v$ such that $v^+$ or $v^-$ is $\T_p$-integrable, we set $\pp v= \pp (v^+) - \pp (v^-)$.

The following definitions extend these basic concepts to integrands. We assume throughout that $h$ is an integrand on $\reals^d$. We define $h^+=\max\{h,0\}$ and $h^-=\max\{-h,0\}$, and denote by $\Lambda^\O_h$ the set of $\reals^d$-valued optional processes $w$ for which $h^+(w)$ or $h^-(w)$ is $\T$-integrable. Similarly, $\Lambda^\P_h$ denotes the set of $\reals^d$-valued predictable processes $w$ for which $h^+(w)$ or $h^-(w)$ is $\T_p$-integrable. We note that $h^+(w)=h(w)^+$ and $h^-(w)=h(w)^-$ for any process $w$, and we use these notations interchangeably depending on which one we find more natural in the context. 

\begin{definition}
For an integrand $h$, we say that ${^oh}:\Omega\times\reals_+\times\reals^d\rightarrow\ereals$ is an {\em optional projection} of $h$ if $^oh$ is an $\O$-integrand on $\reals^d$ such that
\[
[\op h](w)=\op[h(w)]\quad\forall\ w\in\Lambda^\O_h.
\]
\end{definition}

\begin{definition}
For an integrand $h$, we say that $\pp h:\Omega\times\reals_+\times\reals^d\rightarrow\ereals$ is a {\em predictable projection} of $h$ if $\pp h$ is a $\P$-integrand on $\reals^d$ such that
\[
[\pp h](w)=\pp [h(w)]\quad\forall\ w\in\Lambda^\P_h.
\]
\end{definition}

\begin{example}
When $h_t(x,\omega)=v_t(\omega)$ for some $\T$-integrable stochastic process $v$, we simply have $\op h=\op v$. Thus the definition of an optional projection of $h$ reduces to that of an optional projection of the stochastic process $v$.
\end{example}

The following lemma shows that optional and predictable projections of $h$ are characterized by any $\O$ or $\P$-discretely dense subset $\H$ of $\Lambda^\O_h$ and $\Lambda^P_h$, respectively.
\begin{lemma}\label{lem:dds}
Let $h$ be an integrand. If there exists an $\O$-discretely dense set $\H$ contained in $\Lambda^\O_h$ and an optional integrand $\tilde h$ such that
\[
\tilde h(w)=\op [h(w)]\quad\forall\ w\in\H,
\]
then $\tilde h$ is an optional projection of $h$. If there exists a $\P$-discretely dense set $\hat\H$ contained in $\Lambda^\O_h$ and a predictable integrand $\hat h$ such that 
\[
\hat h(w)=\pp [h(w)]\quad\forall\ w\in\hat \H,
\]
then $\hat h$ is a predictable projection of $h$.
\end{lemma}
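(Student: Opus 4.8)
The plan is to upgrade the identity $\tilde h(w)=\op[h(w)]$ from the discretely dense family $\H$ to all of $\Lambda^\O_h$ by combining the $\O$-discrete density of $\H$ with a \emph{localization} property of the optional projection. The property I need is that for every $A\in\O$ and every process $v$ whose optional projection is defined,
\[
\op[1_A v]=1_A\,\op v.
\]
First I would prove this for nonnegative $v$ straight from the stopping-time characterization of $\op v$: fixing $\tau\in\T$, the variable $(1_A)_\tau\one_{\{\tau<\infty\}}$ is $\F_\tau$-measurable because $A$ is optional, so it factors out of the conditional expectation,
\[
\op[1_A v]_\tau\one_{\{\tau<\infty\}}=E\left[(1_A)_\tau v_\tau\one_{\{\tau<\infty\}}\mid\F_\tau\right]=(1_A)_\tau\one_{\{\tau<\infty\}}\,\op{v}_\tau,
\]
and uniqueness of the projection yields the claim. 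The general case follows by applying the nonnegative identity to $v^+$ and $v^-$; for $v=h(w)$ with $w\in\Lambda^\O_h$, one of $\op[h^+(w)]$, $\op[h^-(w)]$ is finite off an evanescent set, so $\op[h(w)]=\op[h^+(w)]-\op[h^-(w)]$ is unambiguous and localization passes through the difference.

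With localization established, the main argument is short. Given $w\in\Lambda^\O_h$, the $\O$-discrete density of $\H$ supplies an $\O$-measurable covering $(A^\nu)_{\nu=1}^\infty$ of $\Omega\times\reals_+$ and processes $w^\nu\in\H$ with $1_{A^\nu}w=1_{A^\nu}w^\nu$. On each $A^\nu$ the equality $w=w^\nu$ gives $1_{A^\nu}\tilde h(w)=1_{A^\nu}\tilde h(w^\nu)$ and $1_{A^\nu}h(w)=1_{A^\nu}h(w^\nu)$ pointwise; localization applied to the latter yields $1_{A^\nu}\op[h(w)]=1_{A^\nu}\op[h(w^\nu)]$. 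Because $w^\nu\in\H$, the hypothesis gives $\tilde h(w^\nu)=\op[h(w^\nu)]$, so on $A^\nu$,
\[
1_{A^\nu}\tilde h(w)=1_{A^\nu}\tilde h(w^\nu)=1_{A^\nu}\op[h(w^\nu)]=1_{A^\nu}\op[h(w)].
\]
Since the $A^\nu$ cover $\Omega\times\reals_+$, this forces $\tilde h(w)=\op[h(w)]$ outside an evanescent set, as required. The predictable statement is proved verbatim, replacing $\O$, $\T$, $\F_\tau$, and the optional projection by $\P$, $\T_p$, $\F_{\tau-}$, and the predictable projection.

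I expect the only genuine obstacle to be the bookkeeping in the localization step for signed, extended-real integrands: one has to check that decomposing through $h^+$ and $h^-$ never creates an $\infty-\infty$ ambiguity, which is exactly what membership in $\Lambda^\O_h$ rules out, and that $\tilde h(w)$ and $\op[h(w)]$ are genuine $\O$-measurable processes — the former because $\tilde h$ is an optional integrand and $w$ is optional, the latter by the definition of the optional projection. The remaining gluing across the countable optional covering is routine.
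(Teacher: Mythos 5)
Your proof is correct, but it takes a genuinely different route from the paper's. The paper argues by contradiction: assuming $\tilde h(w)\ne\op [h(w)]$ on a nonevanescent set for some $w\in\Lambda^\O_h$, it applies the set-valued section theorem (Theorem~\ref{thm:ost}) to the singleton-valued process $\Gamma_t(\omega)=\{w_t(\omega)\}$ supported on the disagreement set, obtaining $\tau\in\T$ with $P(\tau<\infty)>0$ and some $\tilde w\in\H$ agreeing with $w$ on $\graph\tau$, which contradicts the hypothesis at $\tilde w$. You instead argue directly, gluing the identity over the covering $(A^\nu)$ supplied by $\O$-discrete density via the localization property $\op [1_A v]=1_A\op v$ for $A\in\O$. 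Your route bypasses Theorem~\ref{thm:ost} entirely and rests only on classical facts about scalar optional projections (that an optional process evaluated at a stopping time is $\F_\tau$-measurable, and uniqueness of the scalar projection); it is constructive and arguably more transparent about which property of the projection does the work. The cost is exactly the extended-real bookkeeping you flag: one must check that $\op v=\op (v^+)-\op (v^-)$ commutes with multiplication by $1_A$, which goes through because membership in $\Lambda^\O_h$ forces one of $\op [h^+(w)]$, $\op [h^-(w)]$ to be finite off an evanescent set (and the paper's convention that sums are $+\infty$ whenever a term is $+\infty$ covers the degenerate cases). The paper's argument buys brevity by reusing the section-theorem machinery it has already built and will use repeatedly; both proofs are valid.
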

\begin{proof}
Assume for a contradiction that $\tilde h(w)\ne\op [h(w)]$ for some $w\in\Lambda^\O_h$. Applying Theorem~\ref{thm:ost} to the set-valued process 
\[
\Gamma_t(\omega)=\begin{cases}
\{w_t(\omega)\}\quad&\text{if } (\tilde h(w))_t(\omega)\ne(\op [h(w)])_t(\omega)\\
\emptyset\quad&\text{otherwise},
\end{cases}
\]
we get $\tau\in\T$ with $P(\tau<\infty)>0$ and $\tilde w\in\H$ such that  $\one_{\{\tau<\infty\}}\tilde h(\tilde w)_\tau\ne\one_{\{\tau<\infty\}}\op [h(\tilde w)]_\tau$, which is a contradiction. The predictable case is proved similarly using Theorem~\ref{thm:pst}.
\end{proof}

The following lemma states that the projections preserve order. Choosing $h^1=h^2$ in the lemma, one gets uniqueness of the projections as soon as $\Lambda^\O_h$ contains an $\O$-discretely dense set. The example after the proof shows that this is not the case if such $\O$-discretely dense set does not exist.

\begin{lemma}\label{lem:cp1}
For integrands $h^1$ and $h^2$ with $h^1\le h^2$, we have ${\op h^1}\le {\op h^2}$ and ${\pp h^1}\le {\pp h^2}$ whenever the projections exist and $\Lambda^{\O}_{h^1}\cap \Lambda^\O_{h^2}$ contains an $\O$-discretely dense set or $\Lambda^{\P}_{h^1}\cap \Lambda^\P_{h^2}$ contains a $\P$-discretely dense, respectively.
\end{lemma}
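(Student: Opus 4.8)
The plan is to prove the contrapositive via a section-theorem argument, mirroring the proof of Lemma~\ref{lem:dds}. Suppose $h^1 \le h^2$ but $\op h^1 \le \op h^2$ fails, so that the set
\[
D := \{(\omega,t)\mid \exists x\in\reals^d:\ (\op h^1)_t(x,\omega) > (\op h^2)_t(x,\omega)\}
\]
is nonevanescent. First I would verify that $D$ is the domain of the optional set-valued process
\[
\Gamma_t(\omega) = \{x\in\reals^d\mid (\op h^1)_t(x,\omega) > (\op h^2)_t(x,\omega)\},
\]
whose graph is $\O\otimes\B(\reals^d)$-measurable because $\op h^1$ and $\op h^2$ are $\O$-integrands. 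Applying Theorem~\ref{thm:ost} to $\Gamma$ together with the hypothesized $\O$-discretely dense subset $\H$ of $\Lambda^\O_{h^1}\cap\Lambda^\O_{h^2}$, I obtain $\tau\in\T$ with $P(\{\tau<\infty\})>0$ and a process $w\in\H$ such that $w_\tau\in\Gamma_\tau$ on $\{\tau<\infty\}$, i.e.
\[
(\op h^1)_\tau(w_\tau) > (\op h^2)_\tau(w_\tau)\quad\text{on }\{\tau<\infty\}.
\]

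The next step is to convert this strict inequality at the random point $w_\tau$ into a contradiction with $h^1\le h^2$. Since $w\in\H\subseteq\Lambda^\O_{h^1}\cap\Lambda^\O_{h^2}$, the defining property of the optional projections of integrands gives $[\op h^i](w)=\op[h^i(w)]$ for $i=1,2$. Because $h^1\le h^2$ holds outside an evanescent set, we have $h^1(w)\le h^2(w)$ outside an evanescent set, and hence $h^1(w)^+\le h^2(w)^+$ and $h^2(w)^-\le h^1(w)^-$ pointwise. Monotonicity of the ordinary optional projection of (extended) real-valued processes then yields $\op[h^1(w)]\le\op[h^2(w)]$, so that $[\op h^1](w)\le[\op h^2](w)$ outside an evanescent set. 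Evaluating at $\tau$ via the characterization of indistinguishability through stopping times (the fundamental result cited before Lemma~\ref{lem:inc}), this forces $(\op h^1)_\tau(w_\tau)\le(\op h^2)_\tau(w_\tau)$ almost surely on $\{\tau<\infty\}$, contradicting the strict inequality above. Truncating $\tau$ by a large constant $M$ if a bounded stopping time is needed poses no difficulty. The predictable case is identical, using Theorem~\ref{thm:pst} and the predictable versions of the projection identities.

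The main obstacle I anticipate is the careful handling of the extended real-valued and possibly non-integrable case when passing from $h^1\le h^2$ to $\op[h^1(w)]\le\op[h^2(w)]$: since $w\in\Lambda^\O_{h^i}$ only guarantees that $h^i(w)^+$ \emph{or} $h^i(w)^-$ is $\T$-integrable, one must check that the decomposition $\op v=\op(v^+)-\op(v^-)$ used to define the projection respects the order under these one-sided integrability hypotheses. I would resolve this by treating $\op(v^+)$ and $\op(v^-)$ separately, using the monotone-convergence construction of projections for nonnegative processes recalled in Section~\ref{sec:proj}, and invoking the convention that expectations and sums take the value $+\infty$ when a positive part fails to be integrable; this makes the inequality $\op[h^1(w)]\le\op[h^2(w)]$ valid in $\ereals$ without requiring full integrability. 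Everything else is a routine transcription of the argument in Lemma~\ref{lem:dds}.
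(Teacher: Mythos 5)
Your proof is correct, and its skeleton --- apply Theorem~\ref{thm:ost} to the optional set-valued process whose domain is the set where the claimed inequality fails, and extract $\tau\in\T$ with $P(\{\tau<\infty\})>0$ and $w\in\H\subseteq\Lambda^\O_{h^1}\cap\Lambda^\O_{h^2}$ witnessing the failure --- is the same as the paper's. Where you diverge is in how the contradiction is closed. The paper inserts a real level $\alpha$ between the two projections, working with $\Gamma_t(\omega)=\{x\mid \op h^1_t(x,\omega)>\alpha>\op h^2_t(x,\omega)\}$, and then compares expectations: the separating level guarantees that
\[
E\left[\op h^2_\tau(w_\tau)\one_{\{\tau<\infty\}}\right]<\alpha P(\{\tau<\infty\})<E\left[\op h^1_\tau(w_\tau)\one_{\{\tau<\infty\}}\right]
\]
is a genuine strict inequality of extended reals even under the one-sided integrability available for $w\in\Lambda^\O_{h^i}$, and this contradicts $h^1\le h^2$ via the defining property of the projection and the tower property. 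You instead derive $[\op h^1](w)\le[\op h^2](w)$ as an inequality of \emph{processes}, by invoking monotonicity of the ordinary optional projection applied to $h^1(w)\le h^2(w)$, and contradict it along the graph of $\tau$. This is valid, but it shifts the burden onto exactly the point you flag: one must check that $v^1\le v^2$ implies $\op v^1\le \op v^2$ when each $v^i$ is only one-sidedly $\T$-integrable. Your resolution works: $v^{1,+}\le v^{2,+}$ and $v^{1,-}\ge v^{2,-}$, the nonnegative case follows from the stopping-time characterization together with monotone conditional expectation, and the difference $\op(v^+)-\op(v^-)$ respects the order under the paper's convention that a sum is $+\infty$ whenever one term is $+\infty$. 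In short, the paper's intermediate $\alpha$ buys a self-contained expectation argument that never needs this auxiliary monotonicity lemma, while your route makes the order-preservation of the scalar projection explicit and avoids expectations altogether; note also that you do not actually need to truncate $\tau$, since the contradiction is with an almost-sure inequality on $\{\tau<\infty\}$.
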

\begin{proof}
We prove only the optional case, the claim for ${^ph^1}\le {^ph^2}$ follows similarly from Theorem~\ref{thm:pst}. We assume for a contradiction that $\dom\Gamma$ is not evanescent for
\[
\Gamma_t(\omega)=\{x\in\reals^d\mid {^oh}^1_t(x,\omega)>\alpha>{^oh}^2_t(x,\omega)\}
\]
for some $\alpha\in\reals$. By Theorem~\ref{thm:ost}, there is a $\tau\in\T$ with $P(\{\tau<\infty\})>0$ and $w\in(\Lambda^{\O}_{h^1}\cap \Lambda^\O_{h^2})$ with $w_\tau\in\Gamma_\tau$ on $\{\tau<\infty\}$. Thus
\begin{align*}
E \left[h^1_{\tau}(w_{\tau})\one_{\{{\tau}<\infty\}}\right]&=E \left[[{^o h}^1]_{\tau}(w_{\tau})\one_{\{{\tau}<\infty\}}\right]>\alpha P(\{{\tau}<\infty\})\\
&>E\left[[{^o h}^2]_{\tau}(w_{\tau})\one_{\{{\tau}<\infty\}}\right] =E\left[h^2_{\tau}(w_{\tau})\one_{\{{\tau}<\infty\}}\right],
\end{align*}
which is a contradiction with $h^1\le h^2$. Therefore, $\dom\Gamma$ is evanescent, and hence ${^oh^1}\le {^oh^2}$.
\end{proof}

\begin{example}
Let $\eta$ be a measurable process for which $\eta^+$ and $\eta^-$ are not $\T$-integrable, and let
\[
h_t(x,\omega)=\eta_t(\omega).
\]
Here $\Lambda^\O_h=\emptyset$, so every optional integrand is an optional projection of $h$.
\end{example} 

The following result is a monotone convergence theorem for projections of integrands. 
\begin{theorem}\label{thm:mon}
Let $(h^\nu)_{\nu=1}^\infty$ be a nondecreasing sequence of integrands and 
\[
h=\sup_\nu h^\nu.
\]
If each $\op h^\nu$ exists and there exists an $\O$-discretely dense subset of optional processes $w$ for which $h^1(w)^-$ is $\T$-integrable, then $\op h$ exists and 
\[
\op h=\sup_\nu \op h^\nu.
\]
If each $\pp h^\nu$ exists and there exists a $\P$-discretely dense subset of predictable processes $w$ for which $h^1(w)^-$ is $\T_p$-integrable, then $\pp h$ exists and 
\[
\pp h=\sup_\nu \pp h^\nu.
\]
\end{theorem}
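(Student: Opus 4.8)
The plan is to set $\tilde h:=\sup_\nu\op h^\nu$ and to show that $\tilde h$ is the optional projection of $h$; the predictable case is entirely analogous, with $\T,\O,\F_\tau$ replaced by $\T_p,\P,\F_{\tau-}$ and the predictable forms of the cited results used throughout. Since each $\op h^\nu$ is an $\O$-integrand and a countable pointwise supremum of $\O\otimes\B(\reals^d)$-measurable functions is again $\O\otimes\B(\reals^d)$-measurable, $\tilde h$ is an $\O$-integrand. Let $\H$ be the $\O$-discretely dense set of optional processes $w$ with $h^1(w)^-$ being $\T$-integrable that is furnished by the hypothesis. Because $h^1\le h^\nu\le h$, for every $w\in\H$ we have $h^\nu(w)^-\le h^1(w)^-$ and $h(w)^-\le h^1(w)^-$, so all of these negative parts are $\T$-integrable; hence $\H\subseteq\Lambda^\O_{h^\nu}$ for every $\nu$ and $\H\subseteq\Lambda^\O_h$. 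By Lemma~\ref{lem:dds} it therefore suffices to verify that $\tilde h(w)=\op[h(w)]$ for every $w\in\H$. (Incidentally, since $\H\subseteq\Lambda^\O_{h^\nu}\cap\Lambda^\O_{h^{\nu+1}}$ is $\O$-discretely dense, Lemma~\ref{lem:cp1} shows that the integrands $\op h^\nu$ are nondecreasing, so the supremum is a genuine increasing limit; this is not needed below.)

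Fix $w\in\H$. As $\op h^\nu$ is the optional projection of $h^\nu$ and $w\in\Lambda^\O_{h^\nu}$, we have $(\op h^\nu)(w)=\op[h^\nu(w)]$, whence
\[
\tilde h(w)=\sup_\nu(\op h^\nu)(w)=\sup_\nu\op[h^\nu(w)].
\]
The processes $v^\nu:=h^\nu(w)$ form a nondecreasing sequence with $\sup_\nu v^\nu=h(w)$ and with $(v^1)^-$ being $\T$-integrable. It thus remains to prove the monotone convergence identity at the level of projections of processes, namely $\op[\sup_\nu v^\nu]=\sup_\nu\op[v^\nu]$, since this yields $\op[h(w)]=\sup_\nu\op[v^\nu]=\tilde h(w)$ and closes the argument via Lemma~\ref{lem:dds}.

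For this last identity I would reduce to the nonnegative case. The processes $u^\nu:=v^\nu+(v^1)^-$ are nonnegative (since $v^\nu\ge v^1\ge-(v^1)^-$) and nondecreasing with $\sup_\nu u^\nu=h(w)+(v^1)^-$. For nonnegative nondecreasing processes the identity $\op[\sup_\nu u^\nu]=\sup_\nu\op[u^\nu]$ is classical: evaluating both sides at an arbitrary $\tau\in\T$ and applying the monotone convergence theorem for conditional expectations to $E[u^\nu_\tau\one_{\{\tau<\infty\}}\mid\F_\tau]$ shows that the two optional processes agree at every bounded stopping time, and hence coincide by the characterisation recalled before Lemma~\ref{lem:inc}. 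Finally, since $(v^1)^-$ is $\T$-integrable, its optional projection is finite almost surely, so additivity of the optional projection gives $\op[u^\nu]=\op[v^\nu]+\op[(v^1)^-]$ and $\op[\sup_\nu u^\nu]=\op[h(w)]+\op[(v^1)^-]$; subtracting the common finite term $\op[(v^1)^-]$ produces $\op[h(w)]=\sup_\nu\op[v^\nu]$.

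The main obstacle is precisely this last reduction: outside the nonnegative case the optional projection is defined only through the splitting $\op v=\op(v^+)-\op(v^-)$, so the monotone convergence identity must be manipulated with care to avoid the indeterminate form $\infty-\infty$. The hypothesis that $h^1(w)^-$ be $\T$-integrable on an $\O$-discretely dense set is exactly what makes the subtracted term $\op[(v^1)^-]$ finite and the additivity legitimate; the remainder is routine bookkeeping with the monotonicity and additivity of conditional expectations at stopping times.
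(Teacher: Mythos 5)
Your proposal is correct and follows essentially the same route as the paper: both verify the identity $\tilde h(w)=\op[h(w)]$ on the given $\O$-discretely dense set via monotone convergence (using the $\T$-integrability of $h^1(w)^-$ to avoid $\infty-\infty$) and then invoke Lemma~\ref{lem:dds}. The only difference is packaging: the paper carries out the monotone-convergence step directly on the expectations $E[\one_A\one_{\{\tau<\infty\}}\,\cdot\,]$ for $A\in\F_\tau$, whereas you isolate it as a monotone convergence statement for projections of processes via the shift by $(v^1)^-$; both are sound.
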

\begin{proof}
Let $\H$ be an $\O$-discretely dense set such that $h^1(w)^-$ is $\T$-integrable for each $w\in\H$. Then $\H$ is an $\O$-discretely dense subset of each $\Lambda^\O_{h^i}$ and $\Lambda^\O_h$. For any $w\in\H$, $\tau\in\T$ and $A\in\F_\tau$,  the monotone convergence and Lemma~\ref{lem:cp1} imply that
\begin{align*}
E[\one_A\one_{\{\tau<\infty\}}{h}_\tau(w_\tau)] &= E[\one_A\one_{\{\tau<\infty\}}\sup_\nu h^\nu_\tau(w_\tau)]\\
&= \sup_\nu E[\one_A\one_{\{\tau<\infty\}}h^\nu_\tau(w_\tau)]\\
&= \sup_\nu E[\one_A\one_{\{\tau<\infty\}}\op h^\nu_\tau(w_\tau)]\\
&=  E[\one_A\one_{\{\tau<\infty\}}\sup_\nu \op h^\nu_\tau(w_\tau)].
\end{align*}
Thus $\sup_\nu \op h^\nu=\op h$ by Lemma~\ref{lem:dds}. The predictable case is similar.
\end{proof}

Now we are ready to prove our first main result.
\begin{theorem}\label{thm:cp}
If $\Lambda^\O_h$ contains an $\O$-discretely dense set, then $h$ has a unique optional projection. If $\Lambda^\P_h$ contains a $\P$-discretely dense set, then $h$ has a unique predictable projection.
\end{theorem}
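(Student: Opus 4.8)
The plan is to dispatch uniqueness immediately and spend all the effort on existence. For uniqueness, apply Lemma~\ref{lem:cp1} with $h^1=h^2=h$: the standing hypothesis that $\Lambda^\O_h$ contains an $\O$-discretely dense set gives the same for $\Lambda^\O_{h^1}\cap\Lambda^\O_{h^2}$, so any two optional projections $\op h^1,\op h^2$ of $h$ satisfy $\op h^1\le\op h^2$ and $\op h^2\le\op h^1$, hence are indistinguishable. The predictable statement is proved verbatim after replacing Theorem~\ref{thm:ost} by Theorem~\ref{thm:pst}, $\T$ by $\T_p$, and optional projections by predictable ones, so I will describe only the optional case. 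For existence I would construct $\op h$ on a generating class of integrands and then pass to monotone limits, keeping $\op h$ an $\O$-integrand at every stage.

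\textbf{Base class and a functional monotone class argument.} For a product integrand $h_t(x,\omega)=\one_A(x)\,v_t(\omega)$ with $A\in\B(\reals^d)$ and a bounded nonnegative process $v$, the candidate is the $\O$-integrand $[\op h]_t(x,\omega)=\one_A(x)\,(\op v)_t(\omega)$. For bounded optional $w$ the process $\one_A(w)$ is bounded and optional, so the ``taking out what is known'' property of optional projections (see, e.g., \cite{hwy92}) gives $\op[h(w)]=\op[\one_A(w)\,v]=\one_A(w)\,\op v=[\op h](w)$; since the bounded optional processes form an $\O$-discretely dense subset of $\Lambda^\O_h$, Lemma~\ref{lem:dds} confirms that $\op h$ is the optional projection of $h$. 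Let $\K$ be the class of bounded integrands admitting an optional projection. These candidates show that $\K$ contains the constants and the multiplicative system of indicators of product sets, which generates $\F\otimes\B(\reals_+)\otimes\B(\reals^d)$. By Lemma~\ref{lem:dds} and the linearity of optional projections of bounded processes, $\K$ is a vector space, and by Theorem~\ref{thm:mon} (with bounded optional processes as the required $\O$-discretely dense set, the negative-part condition being vacuous for nonnegative sequences) it is closed under bounded monotone limits. The functional monotone class theorem then places every bounded integrand in $\K$.

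\textbf{Passage to the general case.} For a nonnegative integrand write $h=\sup_\nu(h\wedge\nu)$; each truncation is bounded, hence lies in $\K$, and Theorem~\ref{thm:mon} yields the $\O$-integrand $\op h=\sup_\nu\op(h\wedge\nu)$. For an arbitrary integrand, apply this to $h^+$ and $h^-$, and set $\op h:=\op{(h^+)}-\op{(h^-)}$ off the measurable set where this reads $\infty-\infty$ (assigning, say, $+\infty$ there); this is again an $\O$-integrand. The hypothesis enters here: fix an $\O$-discretely dense $\H\subseteq\Lambda^\O_h$. For $w\in\H$ one of $h^+(w),h^-(w)$ is $\T$-integrable, and a standard optional-section argument shows the optional projection of a $\T$-integrable nonnegative process is finite outside an evanescent set; thus $w$ avoids the $\infty-\infty$ set and
\[
[\op h](w)=\op{(h^+)}(w)-\op{(h^-)}(w)=\op[h^+(w)]-\op[h^-(w)]=\op[h(w)].
\]
By Lemma~\ref{lem:dds}, $\op h$ is an optional projection of $h$.

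\textbf{Main obstacle.} The crux is the base/monotone-class step: for a general integrand one must produce an object depending jointly and measurably on the parameter $x$ and on $(\omega,t)$ while realizing the defining identity on a discretely dense family. Reducing to a multiplicative system is precisely what tames the $x$-dependence, after which Theorem~\ref{thm:mon} carries the analytic load. The secondary delicate point is the $\infty-\infty$ ambiguity in recombining $h^+$ and $h^-$, which is exactly what the richness hypothesis on $\Lambda^\O_h$ is there to resolve.
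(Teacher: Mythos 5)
Your proposal is correct and follows essentially the same route as the paper: uniqueness from Lemma~\ref{lem:cp1}, existence by a monotone class argument seeded with product integrands $\one_A(x)v_t(\omega)$ (the paper uses $\one_A 1_B$) and the ``taking out what is known'' identity, bounded monotone limits via Theorem~\ref{thm:mon}, truncation for nonnegative integrands, and finally the decomposition $\op h=\op(h^+)-\op(h^-)$ with the $\O$-discretely dense subset of $\Lambda^\O_h$ resolving the $\infty-\infty$ ambiguity exactly as in the paper's last step.
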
 
\begin{proof}
The uniqueness follows from Lemma~\ref{lem:cp1}, so it suffices to prove the existence. We use a monotone class argument together with Lemma~\ref{lem:dds}.

Let $A\in\B(\reals^d)$ and $B\in \F\otimes \B(\reals_+)$. Then $\op(\one_A 1_B)=\one_A \op 1_B$. Indeed, for every $\tau\in\T$, $F\in\F_\tau$ and optional process $w$, we get from \cite[Corollary 3.23]{hwy92} that $\one_A(w_\tau\one_{\{\tau<\infty\}})$ is $\F_\tau$-measurable, so
\[
\begin{split}
E[\one_F\one_A(w_\tau)(1_B)_\tau\one_{\{\tau<\infty\}}]&=E[\one_F\one_A(w_\tau\one_{\{\tau<\infty\}})(1_B)_\tau\one_{\{\tau<\infty\}}]\\
&=E[\one_F\one_A(w_\tau\one_{\{\tau<\infty\}})(\op 1_B)_\tau\one_{\{\tau<\infty\}}]\\
&=E[\one_F\one_A(w_\tau)(\op 1_B)_\tau\one_{\{\tau<\infty\}}].\\
\end{split}
\]

Let $(h^\nu)_{\nu=1}^\infty$ be a nondecreasing nonnegative sequence which converges point-wise to some bounded $h$ and such that ${^o h^\nu}$ exists for every $\nu$.  By Theorem~\ref{thm:mon}, $\sup_\nu \op h^\nu_t$ is the optional projection of $h$. Evidently, when a bounded $h$ has an optional projection, then $-h$ has an optional projection as well, so, by the monotone class theorem \cite[Theorem 1.4]{hwy92}, every bounded integrand admits an optional projection.

Any nonnegative integrand is a point-wise limit of a nondecreasing sequence of nonnegative bounded integrands, so, the existence of an optional projection for such integrand follows from Theorem~\ref{thm:mon}.

Assume that $\H$ is an $\O$-discretely dense set contained in $\Lambda^\O_h$. For $w\in\H$, either $h^+(w)$ or $h^-(w)$ is $\T$-integrable, so either $\op h^+(w)$ or $\op h^-(w)$ is integrable as well. Thus
\[
\begin{split}
E[\one_Fh_\tau(w_\tau)\one_{\{\tau<\infty\}}]&=E[\one_Fh^+_\tau(w_\tau)\one_{\{\tau<\infty\}}]-E[\one_Fh^-_\tau(w_\tau)\one_{\{\tau<\infty\}}]\\
&=E[\one_F\op h^+_\tau(w_\tau)\one_{\{\tau<\infty\}}]-E[\one_F\op h^-_\tau(w_\tau)\one_{\{\tau<\infty\}}]\\
&=E[\one_F(\op h^+_\tau(w_\tau)-\op h^-_\tau(w_\tau))\one_{\{\tau<\infty\}}],
\end{split}
\]
so the optional projection of $h$ is given by $\op h=\op [h^+]-\op [h^-]$. The predictable case is proved similarly.
\end{proof}

We finish this section by relating the optional and predictable projection to the conditional expectation. Let $f:\Omega\times\reals^d\rightarrow \ereals$ be an extended real-valued function that is $\F\otimes\B(\reals^d)$-measurable, i.e., $f$ is an $\F$-integrand on $\reals^d$. Let $\G$ be a sub-$\sigma$-algebra of $\F$. The function $E^\G f:\Omega\times\reals^d:\rightarrow\ereals$ is called the {\em conditional expectation} of $f$ if $E^\G f$ is a $\G$-integrand on $\reals^d$ such that
\begin{equation}\label{eq:ce}
(E^\G f)(\eta) = E^\G f(\eta)\quad\forall \eta\in \Lambda^\G_f,
\end{equation}
where $\Lambda^\G_f$ is the set of $\G$-measurable random variables $\eta$ for which $f(\eta)^+$ or $f(\eta)^-$ is integrable. Here and in what follows, $E^\G$ denotes the conditional expectation with respect to $\G$. For results on conditional expectations, we refer to \cite{bis73,cv77,thi81,tru91,chs3}.
\begin{example}
When $h_t(x,\omega)=f(x,\omega)$ and $\F_t=\G\subseteq\F$ for all $t$, then the definition of an optional projection of $h$ reduces to the definition of a $\G$-conditional expectation of $f$.
\end{example}
The example above together with Theorem~\ref{thm:cp} gives the following corollary. 
\begin{corollary}\label{cor:ece}
Let $f$ be an $\F$-integrand on $\reals^d$. If $\Lambda^\G_f$ contains a $\G$-discretely dense set, then there exists a unique $\G$-conditional expectation of $f$.
\end{corollary}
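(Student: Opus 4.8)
The plan is to deduce the statement from Theorem~\ref{thm:cp} through the Example that precedes it. I instantiate the whole machinery of the paper on the constant filtration $\F_t=\G$ for all $t\ge 0$, augmenting $\G$ by the $P$-null sets if necessary so that the usual hypotheses hold; this affects neither $\G$-conditional expectations nor $\G$-discrete density, both being insensitive to $P$-null modifications. With $h_t(x,\omega):=f(x,\omega)$ one has $\O=\G\otimes\B(\reals_+)$, every stopping time is a $\G$-measurable $[0,\infty]$-valued map with $\F_\tau=\G$, and $h$ is an $\O$-integrand; as recorded in the Example, the optional projections of $h$ are then exactly the $\G$-conditional expectations of $f$. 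It therefore suffices to verify the hypothesis of Theorem~\ref{thm:cp}, namely that $\Lambda^\O_h$ contains an $\O$-discretely dense set.

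I would first note that constant-in-time processes alone cannot form such a family, since a genuinely time-varying optional process (e.g.\ $w_t(\omega)=t$) cannot be matched on any covering by processes whose time sections take only countably many values. Instead I show that $\Lambda^\O_h$ is \emph{itself} $\O$-discretely dense, by covering an arbitrary optional $w$ with patched copies of $w$. Since $h(w)$ is $\O$-measurable, the sets $A^\nu:=\{\nu-1\le |h(w)|<\nu\}$ for $\nu\in\naturals$, together with $A^{+\infty}:=\{h(w)=+\infty\}$ and $A^{-\infty}:=\{h(w)=-\infty\}$, form an optional cover of $\Omega\times\reals_+$. For any base process $\bar w$, the patch $\one_A w+\one_{A^c}\bar w$ is optional and agrees with $w$ on $A$, so it remains to choose, for each $A$, a base point $\bar w\in\Lambda^\O_h$ on the correct side so that the patch again lies in $\Lambda^\O_h$. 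On the bounded pieces $A^\nu$ any $\bar w\in\Lambda^\O_h$ works, since a bounded plus a $\T$-integrable process is $\T$-integrable. On $A^{+\infty}$, where $h^-(w)=0$, I need $\bar w$ with $h^-(\bar w)$ $\T$-integrable; on $A^{-\infty}$ one with $h^+(\bar w)$ $\T$-integrable.

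Producing these base points is where the hypothesis is used, and is the crux. Suppose $A^{+\infty}$ is not evanescent. Projecting to $\Omega$ and selecting a time at which $h(w)=+\infty$ yields a $\G$-measurable random variable $\zeta$ with $f(\zeta)=+\infty$ on a set of positive probability. As $\D\subseteq\Lambda^\G_f$ is $\G$-discretely dense, there are $\G$-sets $B^k$ covering $\Omega$ and $\eta^k\in\D$ with $\one_{B^k}\zeta=\one_{B^k}\eta^k$; on some $B^k$ meeting $\{f(\zeta)=+\infty\}$ in positive probability we have $f(\eta^k)=+\infty$ there, so $f(\eta^k)^+$ is not integrable and hence, because $\eta^k\in\Lambda^\G_f$, $f(\eta^k)^-$ is. The constant-in-time process $\bar w_t\equiv\eta^k$ then has $h^-(\bar w)=f(\eta^k)^-$ $\T$-integrable and is the required base point; the case $A^{-\infty}$ is symmetric, and one element of $\D$ handles the bounded pieces. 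Thus every optional $w$ is covered by elements of $\Lambda^\O_h$, so $\Lambda^\O_h$ is $\O$-discretely dense, and Theorem~\ref{thm:cp} yields a unique optional projection of $h$, that is, a unique $\G$-conditional expectation of $f$.

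I expect the delicate point to be exactly this base-point construction. The one-sided (``or'') nature of $\T$-integrability in the definition of $\Lambda^\O_h$ forces the patch onto a prescribed side on $\{h(w)=\pm\infty\}$, and it is $\G$-discrete density---rather than mere nonemptiness of $\Lambda^\G_f$---that guarantees $\D$ contains an element on that side: without matching the infinite values of $w$ against $\D$, there is no reason such an element should exist. The accompanying measurable selection reducing the time-dependent $w$ to a single $\G$-measurable $\zeta$ is the technical step to carry out carefully (a projection/section argument, with $P$-completion harmless since only integrability up to $P$-null sets is at stake).
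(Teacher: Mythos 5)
You have correctly isolated the crux: to invoke Theorem~\ref{thm:cp} literally one must transfer the hypothesis from $\Lambda^\G_f$ to $\Lambda^\O_h$, and your observation that constant-in-time copies of a $\G$-discretely dense family cannot be $\O$-discretely dense is right. But your construction of the $\O$-discretely dense set fails on measurability. With $\F_t=\G$ the optional $\sigma$-algebra is $\O=\G\otimes\B(\reals_+)$ (up to completion), while $h_t(x,\omega)=f(x,\omega)$ is only an $\F\otimes\B(\reals_+)$-integrand, \emph{not} an $\O$-integrand as you assert: $f$ is $\F\otimes\B(\reals^d)$-measurable, not $\G\otimes\B(\reals^d)$-measurable. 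Consequently, for an optional $w$ the process $h(w)=f(w)$ is only $\F\otimes\B(\reals_+)$-measurable, so your sets $A^\nu=\{\nu-1\le|h(w)|<\nu\}$ and $A^{\pm\infty}=\{h(w)=\pm\infty\}$ are not optional. The definition of $\O$-discretely dense requires the covering to be $\O$-measurable, and, worse, the patches $\one_{A}w+\one_{A^c}\bar w$ are then not optional processes, hence cannot belong to $\Lambda^\O_h$ at all. The same defect reappears in your base-point construction: a measurable section $\sigma$ of $\{h(w)=+\infty\}$ is only $\F$-measurable, so $\zeta=w_\sigma$ is $\F$-measurable rather than $\G$-measurable, and $\G$-discrete density of $\D$ cannot be applied to it. This is not a cosmetic issue; repairing it essentially forces you to replace the level sets of $f(w)$ by level sets of $G^{\pm}(w)$, where $G^{\pm}$ are jointly $\G\otimes\B(\reals^d)$-measurable versions of $E^\G[f^{\pm}]$ --- i.e., you must first build part of the conditional expectation (via the monotone class and monotone convergence steps, which need no hypothesis for nonnegative integrands) before you can even exhibit the optional covering.

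The paper's own route sidesteps the transfer problem entirely: the Example identifies optional projections of $h$ with $\G$-conditional expectations of $f$ at the level of definitions, and the proof of Theorem~\ref{thm:cp} (indicator integrands, monotone class, monotone convergence, the splitting $f=f^+-f^-$, and uniqueness via Lemma~\ref{lem:cp1} with measurable selection in place of the section theorem) is run verbatim on $\Omega$ with $\G$ in place of $\O$ and $\G$-discretely dense sets of random variables in place of $\O$-discretely dense sets of processes. That specialization never requires showing that $\Lambda^\O_h$ contains an $\O$-discretely dense set of processes on $\Omega\times\reals_+$, which is the step your argument does not secure.
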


For $\F$-integrands on $\reals^d$, Corollary~\ref{cor:ece} generalizes the existence and uniqueness results of both \cite[Theorem 2.1]{chs3} and \cite[Theorem 1.4]{tru91}. This can be seen from the following example which does not satisfy the assumptions of either theorem. Indeed, $f$ is not bounded from below by an integrable random variable on any set that intersects $\reals_-$, whereas, for $\alpha > 0$, $f$ is not lower semicontinuous.
\begin{example}\label{ex:ce}
Assume that $d=1$, $\G$ is the trivial (and complete) $\sigma$-algebra, and that 
\[
f(x,\omega)=
\begin{cases}
x\eta(\omega)\quad&\text{if }x\ne 0,\\
\alpha\quad&\text{if } x=0,
\end{cases}
\]
where $\eta$ is $\F$-measurable, nonnegative, and non-integrable, and $\alpha\in\reals$. Here it is easy to verify that the assumptions of Corollary~\ref{cor:ece} are met and that
\[
E^\G f(x,\omega)=
\begin{cases}
-\infty\quad&\text{if } x<0,\\
\alpha\quad&\text{if } x=0,\\
+\infty\quad&\text{if }x>0.
\end{cases}
\]
\end{example}

\section{Projections of normal integrands}\label{sec:projni}

The aim of this section is to give general conditions under which the projections of a normal integrand exist and they are normal integrands. Choosing $\alpha\le 0$ in Example~\ref{ex:ce}, we see that this is not the case in general. 

The following definition is motivated by the results in \cite{chs3}, where conditional expectations of normal integrands were studied.
\begin{definition}
An integrand $h$ is of {\em class $\cD$} if there exists a sequence of open sets $(B^i)_{i=1}^\infty$ such that $\bigcup_{i=1}^\infty B^i=\reals^d$ and, for every $i$, there exists a $\T$-integrable $m^i$ such that 
\[
h\one_{B^i}\ge m^i\one_{B^i}\text{ outside an evanescent set.}
\]
If $m^i$ can be chosen $\T_p$-integrable, then $h$ is of {\em class $\cP$}.
\end{definition}
We remark that each $B^i$ in the definition of class $\cD$ and $\cP$ can be chosen bounded and the sequence $(B^i)_{i=1}^\infty$ increasing. 
For an integrand of class $\cD$, the set of bounded optional process is an $\O$-discretely dense set contained in $\Lambda^\O_h$. Similarly, when $h$ is of class $\cP$, the set of bounded predictable processes is a $\P$-discretely dense set contained in $\Lambda^\P_h$. The following lemma follows directly from Lemma~\ref{lem:dds} and Lemma~\ref{lem:cp1}.
\begin{lemma}\label{lem:bddc}
If $h$ is an integrand of class $\cD$ and there exists an optional integrand $\tilde h$ such that
\[
\tilde h(w)=\op [h(w)]
\]
for all bounded optional processes $w$, then $\tilde h$ is the unique optional projection of $h$. If $h$ is an integrand of class $\cP$ and there exists a predictable integrand $\hat h$ such that 
\[
\hat h(w)=\pp [h(w)]\quad
\]
for all bounded predictable processes $w$, then $\hat h$ is the unique predictable projection of $h$.
\end{lemma}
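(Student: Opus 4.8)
The plan is to obtain this lemma as an immediate consequence of Lemma~\ref{lem:dds} (which characterizes a projection through its agreement with $\op[h(w)]$, resp.\ $\pp[h(w)]$, on \emph{any} discretely dense family of test processes) and Lemma~\ref{lem:cp1} (which, in the diagonal case $h^1=h^2$, yields uniqueness). The sole role of the class $\cD$ and class $\cP$ hypotheses is to supply the discrete density that both of those lemmas require; once this is in place, no further work is needed, so the argument is essentially a matter of matching hypotheses.

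For the optional assertion I would begin by invoking the observation recorded just before the statement: when $h$ is of class $\cD$, the set $\H$ of bounded optional processes is $\O$-discretely dense and contained in $\Lambda^\O_h$. The standing hypothesis, that $\tilde h(w)=\op[h(w)]$ for every bounded optional process $w$, is precisely the assertion that $\tilde h(w)=\op[h(w)]$ for all $w\in\H$. Since $\tilde h$ is assumed to be an optional integrand, Lemma~\ref{lem:dds} applies verbatim and shows that $\tilde h$ is an optional projection of $h$. For uniqueness I would appeal to Lemma~\ref{lem:cp1} with $h^1=h^2=h$: because $\Lambda^\O_h$ contains the $\O$-discretely dense set $\H$, that lemma applies and, as noted in the remark preceding it, forces any two optional projections of $h$ to be indistinguishable. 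Hence $\tilde h$ is the \emph{unique} optional projection.

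The predictable assertion is proved the same way, replacing bounded optional processes by bounded predictable processes, $\Lambda^\O_h$ by $\Lambda^\P_h$, the class $\cD$ density statement by the corresponding class $\cP$ density statement, and the optional halves of Lemmas~\ref{lem:dds} and~\ref{lem:cp1} by their predictable halves. I do not anticipate any real obstacle in this proof; the only point that genuinely needs care is confirming that the test family appearing in the hypothesis (all bounded optional, resp.\ predictable, processes) is exactly a discretely dense subset of the relevant $\Lambda$ lying inside its domain of integrability, and this is furnished directly by the class $\cD$ (resp.\ $\cP$) assumption.
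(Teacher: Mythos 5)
Your proposal is correct and follows exactly the route the paper intends: the paper states that Lemma~\ref{lem:bddc} ``follows directly from Lemma~\ref{lem:dds} and Lemma~\ref{lem:cp1},'' and your argument simply fills in the matching of hypotheses (bounded optional/predictable processes form a discretely dense subset of $\Lambda^\O_h$, resp.\ $\Lambda^\P_h$, for integrands of class $\cD$, resp.\ $\cP$). No gaps.
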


Given $K>0$, an integrand $h$ is said to be {\em $K$-Lipschitz} if, for all $(\omega,t)$ for which $h_t(\cdot,\omega)$ takes a value less than $+\infty$ somewhere, we have that $h_t(\cdot,\omega)$ is finite everywhere and 
\begin{align*}
|h_t(x,\omega)-h_t(y,\omega)|\le K|x-y|\quad\text{for all $x\in\reals^d$ and $y\in\reals^d$.}
\end{align*}

\begin{lemma}\label{lem:cp2}
Let $K>0$. The optional projection of a $K$-Lipschitz integrand of class $\cD$ is a $K$-Lipschitz integrand of class $\cD$.  The predictable projection of a $K$-Lipschitz integrand of class $\cP$ is a $K$-Lipschitz integrand of class $\cP$.
\end{lemma}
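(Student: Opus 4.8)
The plan is to treat the optional case in detail; the predictable case is identical after replacing Theorem~\ref{thm:ost}, $\T$, $\O$ and the optional projection by Theorem~\ref{thm:pst}, $\T_p$, $\P$ and the predictable projection. Throughout I would use the observation (recorded just before Lemma~\ref{lem:bddc}) that for an integrand of class $\cD$ the bounded optional processes form an $\O$-discretely dense subset of $\Lambda^\O_h$, so that by Theorem~\ref{thm:cp} the optional projection $\op h$ exists and is unique; it remains to identify its structure. I would also use the elementary reformulation that an integrand $g$ is $K$-Lipschitz if and only if, outside an evanescent set, $g_t(x,\omega)\le g_t(y,\omega)+K|x-y|$ for all $x,y\in\reals^d$: if some value is finite this inequality forces finiteness everywhere and the symmetric Lipschitz estimate, while if all values equal $+\infty$ it holds trivially.

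First I would prove that $\op h$ is $K$-Lipschitz by showing that the set-valued process
\[
\Gamma_t(\omega)=\set{(x,y)\in\reals^d\times\reals^d}{[\op h]_t(x,\omega)>[\op h]_t(y,\omega)+K|x-y|}
\]
has evanescent domain. Its graph is $\O\otimes\B(\reals^d\times\reals^d)$-measurable because $\op h$ is an $\O$-integrand, so if $\dom\Gamma$ were nonevanescent, Theorem~\ref{thm:ost} (applied on $\reals^{2d}$, with $\H$ the $\O$-discretely dense family of bounded optional processes) would furnish $\tau\in\T$ with $P(\{\tau<\infty\})>0$ and bounded optional $u,v$ such that $[\op h]_\tau(u_\tau)>[\op h]_\tau(v_\tau)+K|u_\tau-v_\tau|$ on $\{\tau<\infty\}$. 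Since $u,v$ are bounded they lie in $\Lambda^\O_h$, so $[\op h]_\tau(u_\tau)\one_{\{\tau<\infty\}}=E[h_\tau(u_\tau)\one_{\{\tau<\infty\}}\mid\F_\tau]$ and likewise for $v$. The strict inequality forces $[\op h]_\tau(v_\tau)<+\infty$; as $h^-(v)$ is integrable (class $\cD$), this propagates to $h_\tau(v_\tau)<+\infty$ a.s.\ on $\{\tau<\infty\}$, whence $h_\tau(\cdot,\omega)$ is finite and $K$-Lipschitz there and $h_\tau(u_\tau)\le h_\tau(v_\tau)+K|u_\tau-v_\tau|$. Taking $E[\,\cdot\mid\F_\tau]$ (the bounded, $\F_\tau$-measurable term $K|u_\tau-v_\tau|$ pulls out) yields $[\op h]_\tau(u_\tau)\le[\op h]_\tau(v_\tau)+K|u_\tau-v_\tau|$ a.s.\ on $\{\tau<\infty\}$, contradicting the strict inequality above. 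Hence $\dom\Gamma$ is evanescent and $\op h$ is $K$-Lipschitz.

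Next I would check that $\op h$ is again of class $\cD$, keeping the same bounded open sets $B^i$ and setting $\tilde m^i:=\op{m^i}$; the contraction property of conditional expectation shows $\tilde m^i$ is again $\T$-integrable. To obtain $\op h\,\one_{B^i}\ge\tilde m^i\,\one_{B^i}$, I would argue as above with the set-valued process $\Gamma_t(\omega)=\set{x\in B^i}{[\op h]_t(x,\omega)<(\op{m^i})_t(\omega)}$: a section $(\tau,w)$ now has $w$ bounded (because $B^i$ is bounded) and $w_\tau\in B^i$ on $\{\tau<\infty\}$, so the class-$\cD$ bound for $h$ gives $h_\tau(w_\tau)\ge m^i_\tau$ a.s.\ there, and projecting gives $[\op h]_\tau(w_\tau)=E[h_\tau(w_\tau)\mid\F_\tau]\ge E[m^i_\tau\mid\F_\tau]=(\op{m^i})_\tau$ a.s.\ on $\{\tau<\infty\}$, contradicting $w_\tau\in\Gamma_\tau$. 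Thus $\dom\Gamma$ is evanescent and $\op h$ is of class $\cD$.

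I expect the main obstacle to be the extended-real bookkeeping in the Lipschitz step: the section theorem produces pointwise inequalities that may involve $+\infty$, and one must first argue that $[\op h]_\tau(v_\tau)<+\infty$ forces finiteness of $h_\tau(v_\tau)$, hence of $h_\tau(u_\tau)$, before conditional expectations can be compared, so that no $\infty-\infty$ term arises and the contradiction is genuine. Restricting the selecting processes to the bounded (hence $\Lambda^\O_h$-)ones via the discretely dense family $\H$ is exactly what makes the identity $[\op h](w)=\op[h(w)]$ available and keeps the Lipschitz correction term $K|u_\tau-v_\tau|$ bounded and $\F_\tau$-measurable.
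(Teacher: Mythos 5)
Your proof is correct and follows essentially the same strategy as the paper's: apply the section theorem (Theorem~\ref{thm:ost}) to the set-valued process of points violating the desired inequality, select a bounded optional process from the $\O$-discretely dense family, and exploit the identity $[\op h](w)=\op[h(w)]$ to reach a contradiction (the paper merely splits your single Lipschitz step into a finiteness-propagation step and an inequality step, and works with unconditional rather than conditional expectations). One small caveat: your claimed equivalence of $K$-Lipschitz with the one-sided inequality $g_t(x,\omega)\le g_t(y,\omega)+K|x-y|$ also admits $g\equiv-\infty$, which the paper's definition excludes, but this case is eliminated by your class-$\cD$ step (which yields $\op h\ge \op{m^i}>-\infty$ on each $B^i$), so the argument as a whole is complete.
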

\begin{proof}
The existence and uniqueness of the projections follow from Theorem~\ref{thm:cp}. Let $h$ be a $K$-Lipschitz integrand of class $\cD$. Since the optional projection of a $\T$-integrable $m$ is $\T$-integrable, $\op h$ is of class $\cD$. Assume that the domain of
\[
\tilde \Gamma_t(\omega)=\{(x,y)\in\reals^d\times\reals^d \mid +\infty = \op h_t(x,\omega)>\tilde\alpha>\op h_t(y,\omega)\}
\]
is nonevanescent for some $\tilde \alpha\in\reals$.  Then, by Theorem~\ref{thm:ost}, there exists $\tilde\tau\in\T$ and bounded optional processes $\tilde v$ and $\tilde w$ such that
\[
\begin{split}
E[\one_{\{{\tilde\tau}<\infty\}}\tilde\alpha] &> E[\one_{\{{\tilde\tau}<\infty\}}\op h_{\tilde\tau}(\tilde w_{\tilde\tau})]\\
&=E[\one_{\{{\tilde\tau}<\infty\}}h_{\tilde\tau}(\tilde w_{\tilde\tau})]\\
&\ge E[\one_{\{{\tilde\tau}<\infty\}}(h_{\tilde\tau}(\tilde v_{\tilde\tau})-K|\tilde v_{\tilde\tau}-\tilde w_{\tilde\tau}|)]\\
&= E[\one_{\{{\tilde\tau}<\infty\}}(\op h_{\tilde\tau}(\tilde v_{\tilde\tau})-K|\tilde v_{\tilde\tau}-\tilde w_{\tilde\tau}|)]\\
&=+\infty,
\end{split}
\]
which is a contradiction. Thus, if $\op h_t(\cdot,\omega)$ takes a finite value somewhere, $\op h_t(\cdot,\omega)$ is finite everywhere. Now, assume that the domain of
\[
\Gamma_t(\omega)=\{(x,y)\in\reals^d\times\reals^d \mid \alpha > \op h_t(x,\omega)>\op h_t(y,\omega)+K|x-y|\}
\]
is nonevanescent for some $\alpha\in\reals$. Then, by Theorem~\ref{thm:ost}, there exists $\tau\in\T$ and bounded optional processes $v$ and $w$ such that
\[
\begin{split}
E [h_\tau(v_\tau)\one_{\{\tau<\infty\}}]&=E[\op h_\tau(v_\tau)\one_{\{\tau<\infty\}}]\\
&>E[\op h_\tau(w_\tau)\one_{\{\tau<\infty\}} +K|v_\tau-w_\tau|\one_{\{\tau<\infty\}}]\\
&=E[h_\tau(w_\tau)\one_{\{\tau<\infty\}} + K|v_\tau-w_\tau|\one_{\{\tau<\infty\}}]
\end{split}
\]
which is a contradiction. The claim for the predictable projection follows similarly from Theorem~\ref{thm:pst}.
\end{proof}

The following theorem is our main result on projections of normal integrands. It is a direct analog of the result in \cite{chs3} on conditional expectations of normal integrands.

\begin{theorem}\label{thm:eup}
The optional projection of a normal integrand of class $\cD$ is a normal integrand of class $\cD$. The predictable projection of a normal integrand of class $\cP$ is a normal integrand of class $\cP$.
\end{theorem}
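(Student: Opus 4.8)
The plan is to reduce the statement to two situations already settled, namely the $K$-Lipschitz case (Lemma~\ref{lem:cp2}) and the case of a normal integrand bounded below by a $\T$-integrable process, and then to localise. I treat the optional case; the predictable case is verbatim with $\T,\O$ and Theorem~\ref{thm:ost} replaced by $\T_p,\P$ and Theorem~\ref{thm:pst}. First, $\op h$ exists and is unique: since $h$ is of class $\cD$, the bounded optional processes form an $\O$-discretely dense subset of $\Lambda^\O_h$, so Theorem~\ref{thm:cp} applies. It remains to show $\op h$ is normal of class $\cD$.

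Step 1, the bounded-below case. Suppose $g$ is a normal integrand with $g\ge m$ for some $\T$-integrable $m$. Consider the Lipschitz regularizations $g^k_t(x,\omega)=\inf_{y}\{g_t(y,\omega)+k|x-y|\}$. Because $g$ is lower semicontinuous and bounded below, each $g^k$ is a $k$-Lipschitz normal integrand (normality is inherited from $g$, see \cite{rw98}), they are nondecreasing in $k$, and $g^k\upto g$ pointwise. Being $\ge m$, each $g^k$ is of class $\cD$, so by Lemma~\ref{lem:cp2} every $\op{g^k}$ is a $k$-Lipschitz normal integrand of class $\cD$, and by Lemma~\ref{lem:cp1} the sequence $(\op{g^k})_k$ is nondecreasing. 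Applying Theorem~\ref{thm:mon} along the $\O$-discretely dense family of bounded optional processes (for which $g^1(w)^-\le m^-$ is $\T$-integrable) yields $\op g=\sup_k\op{g^k}$. Since the epigraphical mappings of the $\op{g^k}$ are closed-valued, measurable and nonincreasing, their intersection $\epi(\op g)_t(\cdot,\omega)=\bigcap_k\epi(\op{g^k})_t(\cdot,\omega)$ is again closed-valued and measurable, so $\op g$ is a normal integrand of class $\cD$.

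Step 2, localisation. Choose, as permitted, bounded open sets with $\cl B^i\subseteq B^{i+1}$, $\bigcup_i B^i=\reals^d$, and $\T$-integrable $m^i$ with $h\ge m^i$ on $B^i$, and set $g^i:=h+\delta_{\cl B^i}$. Each $g^i$ is normal, since $\epi(g^i)_t(\cdot,\omega)=\epi h_t(\cdot,\omega)\cap(\cl B^i\times\reals)$, and $g^i\ge m^{i+1}$ everywhere; hence $\op{g^i}$ is a normal integrand of class $\cD$ by Step 1. The crux is the identity $\op h=\op{g^i}$ on $B^i$. Assuming the optional process $\Gamma_t(\omega)=\{x\in B^i\mid\op h_t(x,\omega)\ne\op{g^i}_t(x,\omega)\}$ has nonevanescent domain, Theorem~\ref{thm:ost} gives $\tau\in\T$ with $P(\tau<\infty)>0$ and a bounded optional $w$ with $w_\tau\in\Gamma_\tau\subseteq B^i$ on $\{\tau<\infty\}$. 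Replacing $w$ by its pointwise projection onto $\cl B^i$ produces a bounded optional process valued in $\cl B^i$ with the same value $w_\tau\in B^i$ at $\tau$; along it $g^i(w)=h(w)\ge m^{i+1}$, so $w\in\Lambda^\O_h\cap\Lambda^\O_{g^i}$ and the defining relations of the two projections give
\[
\op h_\tau(w_\tau)\one_{\{\tau<\infty\}}=E[h_\tau(w_\tau)\one_{\{\tau<\infty\}}\mid\F_\tau]=E[g^i_\tau(w_\tau)\one_{\{\tau<\infty\}}\mid\F_\tau]=\op{g^i}_\tau(w_\tau)\one_{\{\tau<\infty\}},
\]
contradicting $w_\tau\in\Gamma_\tau$. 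Thus $\op h=\op{g^i}$ on $B^i$. I expect this identity to be the main obstacle: the global projection is tested against all processes, whereas $g^i$ agrees with $h$ only on the ball, and the projection-onto-$\cl B^i$ device is precisely what reconciles the two integrability classes $\Lambda^\O_h$ and $\Lambda^\O_{g^i}$.

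Step 3, gluing. Normality is a local property, so the conclusion follows from Step 2. Indeed, on each $B^i$ the function $\op h_t(\cdot,\omega)$ coincides with the lower semicontinuous $\op{g^i}_t(\cdot,\omega)$, and as the $B^i$ cover $\reals^d$ this makes $\op h$ lower semicontinuous; moreover $\epi(\op h)\cap(B^i\times\reals)=\epi(\op{g^i})\cap(B^i\times\reals)$ is measurable, whence $\epi\op h=\bigcup_i[\epi(\op h)\cap(B^i\times\reals)]$ is measurable. Therefore $\op h$ is a normal integrand. Finally it is of class $\cD$, because on $B^i$ it equals $\op{g^i}\ge\op{m^{i+1}}$ with $\op{m^{i+1}}$ $\T$-integrable. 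The predictable statement is obtained by the same three steps using $\T_p$, $\P$, bounded predictable processes, Theorem~\ref{thm:pst}, and the predictable parts of Lemmas~\ref{lem:cp1}, \ref{lem:cp2} and Theorem~\ref{thm:mon}.
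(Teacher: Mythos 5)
Your proposal is correct and follows essentially the same route as the paper: Pasch--Hausdorff (Lipschitz) regularization of $h+\delta_{\cl B^i}$, Lemma~\ref{lem:cp2} for the Lipschitz case, Theorem~\ref{thm:mon} to pass to the supremum, and gluing over the $B^i$; your Step~2 in fact spells out, via the section theorem, the identity $\one_{B^i}\op h=\one_{B^i}\op{(h+\delta_{\cl B^i})}$ that the paper's proof asserts more briefly. The one technicality to repair is the ``pointwise projection onto $\cl B^i$'', which requires $\cl B^i$ to be convex; either take the $B^i$ to be balls (permissible after a compactness refinement of the cover) or simply redefine $w$ to equal a fixed point of $B^i$ off the optional set $\{w\in\cl B^i\}$, which leaves $w_\tau$ unchanged on $\{\tau<\infty\}$ and keeps $w$ bounded, optional, and $\cl B^i$-valued.
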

\begin{proof}
Let $h$ be a normal integrand of class $\cD$ and $(B^i)_{i=1}^\infty$ the sequence of increasing open sets from the definition of class $\cD$. We define
\[
h^{\nu,i}_t(x,\omega)=\inf_{x'\in\reals^d}\{h_t(x',\omega)+\delta_{\bar B^i}(x')+\nu\|x-x'\|\},
\]
where $\bar B^i$ is the closure of $B^i$. Each $h+\delta_{\bar B^i}$ is a normal integrand, so, by \cite[Example 9.11]{rw98}, $h^{\nu,i}_t(\cdot,\omega)$ is Lipschitz and $h^{\nu,i}$ increases to $h+\delta_{\bar B^i}$. By \cite[p. 665]{rw98}, each $h^{\nu,i}$ is thus a normal Lipschitz integrand that is evidently of class $\cD$. By Lemma~\ref{lem:cp2}, $^o h^{\nu,i}$ is a normal integrand of class $\cD$. Since the sequence $(h^{\nu,i})_{\nu=1}^\infty$ is increasing, we get from Lemma~\ref{lem:cp1} and from \cite[Proposition 14.44]{rw98} that $k^i:=\sup_\nu  {^oh}^{\nu,i}$ is a normal integrand of class $\cD$. We will prove that 
\[
k_t(x,\omega) :=\inf_i k^i_t(x,\omega)
\]
is the optional projection of $h$.

Using Theorem~\ref{thm:mon}, we get $k^i=\op(h+\delta_{\bar B_i})$ so that $\one_{B^i}k^i=\one_{B^i}\op h=\one_{B^i}k^j$ for every $i\le j$. Since the sequence $k^i$ is nonincreasing, we thus see that  $\one_{B^i}k=\one_{B^i}k^i$ and that $k_t(\cdot,\omega)$ is lower semicontinuous which in conjunction with \cite[Proposition 14.44]{rw98} implies that $k$ is a normal integrand,
\[
\one_{B^i}k=\one_{B^i}k^i=\one_{B^i}\op h\quad\text{for all }i,
\]
$k$ is of class $\cD$, and that $k=\op h$. The predictable case is proved similarly.
\end{proof}

Recall that a real-valued function $h$ on $\Omega\times\reals_+\times\reals^d$ is called {\em Carath\'eodory integrand} if $h_t(\cdot,\omega)$ is continuous outside an evanescent set and $(\omega,t)\mapsto h_t(x,\omega)$ is measurable for all $x\in\reals^d$. By \cite[Example 14.29]{rw98}, a Carath\' eodory integrand is a normal integrand. We say that an integrand $h$ is of class $(C')$ if $h$ and $-h$ are of class $\cD$. Likewise, we say that an integrand $h$ is of class $(C_p')$ if $h$ and $-h$ are of class $\cP$.
\begin{corollary}
The optional projection of a Carath\'eodory integrand of class $(C')$ is a Carath\'eodory integrand of class $(C')$. The predictable projection of a Carath\'eodory integrand of class $(C_p')$ is a Carath\'eodory integrand of class $(C_p')$.
\end{corollary}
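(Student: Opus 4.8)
The plan is to reduce the statement to Theorem~\ref{thm:eup} together with the negation-compatibility of the projection. First I would recall that a Carath\'eodory integrand is a normal integrand by \cite[Example 14.29]{rw98}; hence if $h$ is of class $(C')$, then both $h$ and $-h$ are normal integrands of class $\cD$. Theorem~\ref{thm:eup} then applies to each of them, so $\op h$ and $\op{(-h)}$ both exist and are normal integrands of class $\cD$.

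The key intermediate step is the identity $\op{(-h)}=-\op h$. To establish it I would fix any bounded optional process $w$. Since the sets $B^i$ in the definition of class $\cD$ may be taken increasing to $\reals^d$, the (compact) closure of the range of $w$ is contained in some $B^i$, and the class-$\cD$ lower bounds for $h$ and for $-h$ then force both $h(w)^+$ and $h(w)^-$ to be $\T$-integrable. Thus $h(w)$ is $\T$-integrable and $\op[-h(w)]=-\op[h(w)]$ by the elementary additivity of the optional projection of a process. Consequently $-\op h$ is an optional integrand satisfying $[-\op h](w)=\op[(-h)(w)]$ for every bounded optional $w$, and since for an integrand of class $\cD$ the bounded optional processes are $\O$-discretely dense, Lemma~\ref{lem:bddc} identifies $-\op h$ as the unique optional projection of $-h$, i.e., $\op{(-h)}=-\op h$.

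With this identity the conclusion is immediate. Because $\op h$ is a normal integrand, the map $x\mapsto\op h_t(x,\omega)$ is lower semicontinuous outside an evanescent set; because $-\op h=\op{(-h)}$ is likewise normal, the same map is upper semicontinuous, hence continuous. Finiteness follows from class $\cD$: being of class $\cD$, $\op h$ is bounded below on each $B^i$ by a $\T$-integrable (hence a.e.\ finite) process, so $\op h>-\infty$; symmetrically $-\op h=\op{(-h)}$ is of class $\cD$, giving $\op h<+\infty$. As $\op h$ is an $\O$-integrand, its sections $(\omega,t)\mapsto\op h_t(x,\omega)$ are automatically measurable, so $\op h$ is a Carath\'eodory integrand; and since both $\op h$ and $-\op h=\op{(-h)}$ are of class $\cD$, it is of class $(C')$.

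The predictable statement I would dispatch verbatim, replacing $\op h$, $\O$, $\cD$, $\T$ throughout by $\pp h$, $\P$, $\cP$, $\T_p$, and invoking Theorem~\ref{thm:pst} and the predictable half of Lemma~\ref{lem:bddc}. The only step demanding genuine care is the identity $\op{(-h)}=-\op h$; the substantive content---that projection sends a normal integrand to a normal integrand---has already been delivered by Theorem~\ref{thm:eup}, so everything else is bookkeeping.
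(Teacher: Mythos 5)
Your proposal is correct and follows essentially the same route as the paper: apply Theorem~\ref{thm:eup} to both $h$ and $-h$, verify $\op[(-h)(w)]=-\op[h(w)]$ for bounded optional $w$ to conclude $\op[-h]=-\op h$ via Lemma~\ref{lem:bddc}, and deduce continuity from the lower semicontinuity of both $\op h$ and $-\op h$. Your write-up merely spells out in more detail the $\T$-integrability of $h(w)$ and the finiteness argument, which the paper leaves implicit.
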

\begin{proof}
By Theorem~\ref{thm:eup}, $\op h$ and $\op [{-h}]$ are normal integrands of class $\cD$. We have $\op [{-h}(w)]=-\op [h(w)]$ for every bounded optional $w$, so Lemma~\ref{lem:bddc} gives that $\op [{-h}]=-\op h$. Thus both $\op h$ and $-\op h$ are normal integrand of class $\cD$ which proves the claim. The predictable case is proved similarly.
\end{proof}

The following corollary extends Theorem~\ref{thm:eup} beyond the class $\cD$.

\begin{corollary}\label{cor:eup}
Assume that $h$ is a normal integrand and let
\[
\bar h_t(x,\omega)=h_t(B_t(\omega)x+b_t(\omega),\omega),
\]
where $B$ is an optional invertible matrix-valued process and $b$ is an optional $\reals^d$-valued process. If $h$ is of class $\cD$, then $\op {\bar h}$ is a normal integrand given by 
\[
(\op {\bar h})_t(x,\omega)=(\op h)_t(B_t(\omega)x+b_t(\omega),\omega).
\]
If $h$ is of class $\cP$ and $B$ and $b$ are predictable, then $\pp {\bar h}$ is a normal integrand given by
\[
(\pp {\bar h})_t(x,\omega)=(\pp h)_t(B_t(\omega)x+b_t(\omega),\omega).
\]
\end{corollary}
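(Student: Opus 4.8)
The plan is to reduce the claim for the affine-substituted integrand $\bar h$ to the already-established Theorem~\ref{thm:eup} by verifying that the proposed formula satisfies the defining property of the optional projection. First I would confirm that $\bar h$ is itself a normal integrand: since $B$ is invertible and $(B,b)$ are optional (hence $\F\otimes\B(\reals_+)$-measurable), the map $(\omega,t,x)\mapsto B_t(\omega)x+b_t(\omega)$ is measurable and, for fixed $(\omega,t)$, an affine homeomorphism of $\reals^d$; composing a normal integrand with such a homeomorphism preserves closedness and measurability of the epigraphical mapping, so $\bar h$ is a normal integrand. I would then check that $\bar h$ inherits class $\cD$ from $h$: using the lower bounds $h\one_{B^i}\ge m^i\one_{B^i}$ for $h$ together with the fact that the preimages of the bounded sets $B^i$ under the optional affine map form an $\F\otimes\B(\reals_+)$-measurable family whose fibers cover $\reals^d$, one can manufacture a new covering of $\reals^d$ by open sets and new $\T$-integrable lower bounds for $\bar h$. (This is the step where one must be slightly careful, since the sets $B^i$ get distorted into random ellipsoids, but locally boundedness is preserved.)

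With $\bar h$ shown to be a normal integrand of class $\cD$, Theorem~\ref{thm:eup} guarantees that $\op{\bar h}$ exists and is a normal integrand of class $\cD$. It then remains only to identify $\op{\bar h}$ with the candidate $g_t(x,\omega):=(\op h)_t(B_t(\omega)x+b_t(\omega),\omega)$. By Lemma~\ref{lem:bddc} it suffices to verify, for every bounded optional process $w$, that $g(w)=\op[\bar h(w)]$. The key observation is that if $w$ is bounded and optional, then $v:=Bw+b$ is optional, so $\bar h(w)=h(v)$ as processes, and moreover $v$ lies in $\Lambda^\O_h$ because $h$ is of class $\cD$. Hence by the definition of $\op h$ we have $\op[\bar h(w)]=\op[h(v)]=(\op h)(v)=g(w)$, which is exactly what Lemma~\ref{lem:bddc} requires; this yields $\op{\bar h}=g$.

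The main obstacle I anticipate is the measurability and integrability bookkeeping in the class-$\cD$ verification for $\bar h$, rather than the identification of the formula. Specifically, one must exhibit an open cover of $\reals^d$ with associated $\T$-integrable lower bounds for $\bar h$; the natural choice is to pull the bounds $m^i$ back through the affine substitution, but because $B$ and $b$ are only optional (not constant), the "good sets" on which $\bar h$ is bounded below are random, so the argument requires choosing a fixed countable open cover of $\reals^d$ and checking that, on each member, $\bar h$ is bounded below by a $\T$-integrable process assembled from the $m^i$ and from $\T$-integrable bounds on $\|B\|$, $\|B^{-1}\|$, and $|b|$ along stopping times. Once these bounds are in place the rest is routine. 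The predictable case is entirely analogous, replacing $\T$ by $\T_p$, optional by predictable, Theorem~\ref{thm:ost} by Theorem~\ref{thm:pst}, and invoking the predictable half of each cited lemma; the invertibility and predictability of $(B,b)$ ensure $v=Bw+b$ is predictable whenever $w$ is.
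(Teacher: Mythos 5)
Your overall architecture (identify the candidate, verify it on a discretely dense family, invoke Lemma~\ref{lem:dds}/Lemma~\ref{lem:bddc}) is the right shape, but the central step fails: $\bar h$ is \emph{not} of class $\cD$ in general, and the paper introduces this corollary precisely to ``extend Theorem~\ref{thm:eup} beyond the class $\cD$.'' The obstacle you flag is not bookkeeping but a genuine obstruction. For a fixed bounded open $U\subseteq\reals^d$, the image $B_t(\omega)U+b_t(\omega)$ is bounded for each $(\omega,t)$ but its size and location are unbounded over $(\omega,t)$, so the induced lower bound for $\bar h$ on $U$ involves infinitely many of the $m^i$ and need not be $\T$-integrable; your proposed repair requires $\T$-integrable bounds on $\|B\|$, $\|B^{-1}\|$, $|b|$, which are nowhere assumed. (Concretely, take $d=1$, $h_t(x,\omega)=-x^2$, $b=0$, and $B=\eta$ a nonnegative optional process with $\eta^2$ not $\T$-integrable: then $h$ is of class $\cD$ with constant $m^i$, but $\bar h_t(x,\omega)=-\eta_t(\omega)^2x^2$ admits no $\T$-integrable lower bound on any open set containing a nonzero point.) The same issue infects your identification step: for bounded optional $w$ the process $v=Bw+b$ is optional but unbounded, so class $\cD$ of $h$ does \emph{not} put $v$ in $\Lambda^\O_h$, and $w$ need not lie in $\Lambda^\O_{\bar h}$ either, so Lemma~\ref{lem:bddc} applied to $\bar h$ is not available.

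The paper's fix is to replace the family of bounded optional processes by $\H=\{w \text{ optional}\mid Bw+b \text{ bounded}\}$. This $\H$ is shown to be $\O$-discretely dense (cover $\Omega\times\reals_+$ by $A^\nu=\{|B\tilde w+b|\le\nu\}$ and patch with $w^\nu=1_{A^\nu}\tilde w-1_{(A^\nu)^C}B^{-1}b$, using invertibility of $B$), and it sits inside $\Lambda^\O_{\bar h}$ because $Bw+b$ bounded together with $h$ of class $\cD$ makes $h(Bw+b)^-$ $\T$-integrable. On $\H$ one applies Lemma~\ref{lem:bddc} to $h$ (not to $\bar h$) with the bounded process $Bw+b$, and then Lemma~\ref{lem:dds} with the discretely dense set $\H$ to conclude that $(\op h)(B\cdot+b)$ is the optional projection of $\bar h$. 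Normality of $\bar h$ and of the projection follows from \cite[Proposition 14.45]{rw98}, as you essentially argued. You should restructure your proof around this adapted dense family rather than trying to force $\bar h$ into class $\cD$.
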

\begin{proof}
By \cite[Proposition 14.45]{rw98}, $\bar h$ is a normal integrand. Let $\H$ be the set of optional processes $w$ for which $Bw+b$ is bounded, and let $\tilde w$ be an optional process. The set $A^\nu:=\{|B\tilde w+b|\le \nu\}$ is nonevanescent for $\nu$ large enough, and $w^\nu=1_{A^\nu}\tilde w-1_{(A^\nu)^C}B^{-1}b$ is optional with $Bw^\nu+b$ bounded. Therefore $\H$ is an $\O$-discretely dense set, and it is evidently contained in $\Lambda^\O_{\bar h}$. By Lemma~\ref{lem:bddc},
\[
\op h(Bw+b)=\op[h(Bw+b)]
\]
for every optional $w\in\H$, so Lemma~\ref{lem:dds} implies that
\[
\tilde h_t(x,\omega) :=(\op h)_t(B_t(\omega)x+b_t(\omega),\omega)
\]
is the optional projection of $\bar h$. Since $\op h$ is a normal integrand by Theorem~\ref{thm:eup}, $\tilde h$ is a normal integrand by \cite[Proposition 14.45]{rw98}. The predictable case is proved similarly.
\end{proof}

\section{Projections of convex-valued processes}\label{sec:svsp}

Let $S:\Omega\rightrightarrows\reals^d$ be a random closed set. We denote the almost sure selections of $S$ by $L^0(S)$. If there exists a random set $\hat S$ that is the smallest $\G$-measurable random closed set $\tilde S$ such that $E^\G\eta\in L^0(\tilde S)$ for every integrable $\eta\in L^0(S)$, then $\hat S$ is known as the {\em $\G$-conditional expectation} of $S$ and it is denoted by $E^\G S$. We refer to \cite{cv77,hu77,hes91,wan01,mol5} for equivalent formulations. Here we need the fact that if $S$ is {\em integrable} in the sense that it has an integrable selection, then the $\G$-conditional expectation of $S$ exists and it is unique up to a $P$-null set; see, e.g., \cite[Theorem 1.4]{wan01}.

\begin{definition}
Let $\Gamma$ be a measurable closed-valued stochastic process. If there exists a set-valued stochastic process $\hat\Gamma$ that is the smallest $\O$-measurable closed-valued stochastic process $\tilde\Gamma$ such that $\op w\in L^0(\tilde\Gamma)$ for every $\T$-integrable $w\in L^0(\Gamma)$, then we call $\hat\Gamma$ the {\em optional projection} of $\Gamma$ and denote it by $\hat\Gamma=\op\Gamma$.
\end{definition}

\begin{definition}
Let $\Gamma$ be a measurable closed-valued stochastic process. If there exists a set-valued stochastic process $\hat\Gamma$ that is the smallest $\P$-measurable closed-valued stochastic process $\tilde\Gamma$ such that $\pp w\in L^0(\tilde\Gamma)$ for every $\T_p$-integrable $w\in L^0(\Gamma)$, then we call $\hat\Gamma$ the {\em predictable projection} of $\Gamma$ and denote it by $\hat\Gamma=\pp\Gamma$.
\end{definition}

These definitions are consistent with the general theory of stochastic processes in the sense that when a set-valued process is single-valued, they reduce to the ordinary optional and predictable projection. They are also analogous to the single-valued case in the following way. We say that a set-valued process is {\em $\T$-integrable} or {\em $\T_p$-integrable} if it has a $\T$-integrable or $\T_p$-integrable selection, respectively. 

\begin{lemma}\label{lem:bst}
Let $\Gamma$ and $\hat\Gamma$ be measurable closed-valued stochastic processes. If $\Gamma$ is $\T$-integrable and $\hat\Gamma$ is optional, then $\hat\Gamma=\op\Gamma$ if and only if $\hat\Gamma_\tau=E^{\F_{\tau}} \Gamma_\tau$ for every bounded $\tau\in\T$. If $\Gamma$ is $\T_p$-integrable and $\hat\Gamma$ is predictable, then $\hat\Gamma=\pp\Gamma$ if and only if $\hat\Gamma_\tau=E^{\F_{\tau-}}\Gamma_\tau$ for every bounded $\tau\in\T_p$.
\end{lemma}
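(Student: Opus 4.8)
The plan is to prove the optional case; the predictable case follows verbatim upon replacing $\T$, $\O$, $\F_\tau$, Theorem~\ref{thm:ost}, and the optional projection of a single-valued process by $\T_p$, $\P$, $\F_{\tau-}$, Theorem~\ref{thm:pst}, and the predictable projection, respectively. The whole argument is organized around reducing the process-level defining property of $\op\Gamma$ to conditions at bounded stopping times, where the single-valued identity $\op w_\tau = E^{\F_\tau} w_\tau$ (valid for bounded $\tau$, since then $\one_{\{\tau<\infty\}}=1$ almost surely) and the defining minimality of $E^{\F_\tau}\Gamma_\tau$ become available.

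The first step I would carry out is to establish, for every optional closed-valued $\tilde\Gamma$, the characterization
\[
\op w\in L^0(\tilde\Gamma)\ \text{for every }\T\text{-integrable }w\in L^0(\Gamma)\iff E^{\F_\tau}\Gamma_\tau\subseteq\tilde\Gamma_\tau\ \text{for every bounded }\tau\in\T.
\]
For the reverse implication, a $\T$-integrable selection $w$ has $w_\tau$ an integrable selection of $\Gamma_\tau$, so $\op w_\tau=E^{\F_\tau}w_\tau\in E^{\F_\tau}\Gamma_\tau\subseteq\tilde\Gamma_\tau$ for every bounded $\tau$, and Lemma~\ref{lem:inc} upgrades this to $\op w\in L^0(\tilde\Gamma)$. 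For the forward implication I would show that every integrable selection $\eta$ of $\Gamma_\tau$ extends to a $\T$-integrable $w\in L^0(\Gamma)$ with $w_\tau=\eta$: fixing a $\T$-integrable selection $w^0$ of $\Gamma$ and setting $w=\eta\circ\pi$ on $\graph\tau$ and $w=w^0$ off $\graph\tau$ gives a measurable selection of $\Gamma$ with $\one_{\{\sigma<\infty\}}|w_\sigma|\le|\eta|+\one_{\{\sigma<\infty\}}|w^0_\sigma|$ for every $\sigma\in\T$, hence $\T$-integrable. Then $E^{\F_\tau}\eta=\op w_\tau\in\tilde\Gamma_\tau$, and minimality of $E^{\F_\tau}\Gamma_\tau$ yields the stated inclusion.

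Granting this characterization, the equivalence of the lemma is almost formal. For sufficiency, if $\hat\Gamma_\tau=E^{\F_\tau}\Gamma_\tau$ for all bounded $\tau$, then $\hat\Gamma$ has the defining projection property, and any competitor $\tilde\Gamma$ satisfies $\hat\Gamma_\tau=E^{\F_\tau}\Gamma_\tau\subseteq\tilde\Gamma_\tau$ for all bounded $\tau$, whence $\hat\Gamma\subseteq\tilde\Gamma$ by Lemma~\ref{lem:inc}; thus $\hat\Gamma$ is the smallest such process, that is, $\hat\Gamma=\op\Gamma$. For necessity, $\hat\Gamma=\op\Gamma$ has the projection property, so $E^{\F_\tau}\Gamma_\tau\subseteq\hat\Gamma_\tau$ for all bounded $\tau$, and only the reverse inclusion at an arbitrary fixed bounded $\tau_0$ remains.

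The main obstacle is precisely this reverse inclusion, which I would extract from minimality by producing a competitor whose value at $\tau_0$ is exactly $R:=E^{\F_{\tau_0}}\Gamma_{\tau_0}$. I define $\Gamma'$ to agree with $\hat\Gamma$ off $\graph{\tau_0}$ and to equal $R$ on $\graph{\tau_0}$. Using a Castaing representation of the $\F_{\tau_0}$-measurable random closed set $R$ by selections $r^n$ and the optional processes $r^n\one_{\{t\ge\tau_0\}}$, the graph of $\Gamma'$ is $\O\otimes\B(\reals^d)$-measurable, so $\Gamma'$ is optional and closed-valued; verifying this optionality is the delicate point. That $\Gamma'$ retains the projection property follows from the characterization: for a bounded $\tau$, off $\{\tau=\tau_0\}$ one has $\Gamma'_\tau=\hat\Gamma_\tau\supseteq E^{\F_\tau}\Gamma_\tau$, while on $\{\tau=\tau_0\}$ the standard fact that $\F_\tau$ and $\F_{\tau_0}$ coincide on $\{\tau=\tau_0\}$ gives $\Gamma_\tau=\Gamma_{\tau_0}$ and makes the two conditional expectations agree there, so $E^{\F_\tau}\Gamma_\tau=R=\Gamma'_\tau$. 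Minimality of $\hat\Gamma=\op\Gamma$ then forces $\hat\Gamma\subseteq\Gamma'$, so $\hat\Gamma_{\tau_0}\subseteq\Gamma'_{\tau_0}=E^{\F_{\tau_0}}\Gamma_{\tau_0}$; combined with the inclusion already established, this gives $\hat\Gamma_{\tau_0}=E^{\F_{\tau_0}}\Gamma_{\tau_0}$ and completes the proof.
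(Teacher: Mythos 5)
Your proof is correct and follows essentially the same route as the paper: both arguments pass between the process level and the level of a fixed bounded stopping time $\tau$ by patching a selection (resp.\ a candidate random set) onto $\graph\tau$ and keeping a fixed $\T$-integrable selection (resp.\ $\hat\Gamma$) off $\graph\tau$, and both then invoke $\op w_\tau=E^{\F_\tau}w_\tau$, the minimality in the definitions, and Lemma~\ref{lem:inc}. The paper merely organizes this as an equivalence of the negations of the two properties via these one-to-one correspondences, whereas you phrase it positively through the inclusion characterization and a competitor $\Gamma'$; the underlying constructions are identical.
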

\begin{proof}
To prove the optional case, let $w$ be a $\T$-integrable selection of $\Gamma$. It suffices to prove that $\hat\Gamma$ is not the optional projection of $\Gamma$ if and only if there is a bounded $\tau\in\T$ such that $\hat\Gamma_\tau$ is not the $\F_\tau$-conditional expectation of $\Gamma_\tau$.

That $\hat\Gamma$ is not the optional projection of $\Gamma$ means that, (i) there is $v\in L^0(\hat\Gamma)$ such that $\op v\notin L^0(\widehat{\Gamma})$, or, (ii) there exists optional $\tilde{\Gamma}$ s.t. $v\in L^0(\Gamma)\implies \op v\in L^0(\tilde{\Gamma})$ and the strict inclusion $\tilde\Gamma\subset\hat\Gamma$ holds outside an evanescent set.

Let $\tau\in\T$ be bounded. That $\hat\Gamma_\tau$ is not the $\F_\tau$-conditional expectation of $\Gamma_\tau$ means that, (i') there exists $\eta\in L^0(\Gamma_\tau)$ s.t. $E^{\F_\tau}\eta\notin L^0(\hat{\Gamma}_\tau)$, or, (ii') there exists $\F_\tau$-measurable $\tilde{S}$ s.t. $\eta\in L^0(\Gamma_\tau)\implies E^{\F_\tau}\eta\in L^0(\tilde{S})$ and the strict inclusion $\tilde{S}\subset\hat{\Gamma}_\tau$ holds outside a $P$-null set.

For a bounded $\tau\in\T$, we have that $v\in L^0(\hat\Gamma)$ and $\eta\in L^0(\hat\Gamma_\tau)$ are in one-to-one correspondence via the mappings 
\begin{equation}\label{eq:corres}
\begin{split}
v&\mapsto v_\tau,\\
\eta&\mapsto\eta 1_{\{\graph\tau\}}+w1_{\{\graph\tau\}^C}.
\end{split}
\end{equation}
Thus it follows from Lemma~\ref{lem:inc} and $E^{\F_\tau}v_\tau=\op v_\tau$ that (i) is equivalent to that (i') holds for some bounded $\tau$. 

Similarly, for a bounded $\tau\in\T$, we have that $\tilde\Gamma$ in (ii) and $\tilde S$ in (ii') are in one-to-one correspondence via the mappings
\begin{align*}
\tilde\Gamma&\mapsto \tilde\Gamma_\tau,\\
\tilde S&\mapsto \tilde{S}1_{\{\graph\tau\}}+\hat{\Gamma}1_{\{\graph\tau\}^C}.
\end{align*}
Thus, it follows from Lemma~\ref{lem:inc} and \eqref{eq:corres} that (ii) is equivalent to that (ii') holds for some bounded $\tau$. The predictable case is proved similarly.
\end{proof}

Let $f:\Omega\times\reals^d\rightarrow\ereals$ be a normal $\F$-integrand on $\reals^d$. The $\G$-conditional expectation of the epigraphical mapping $\epi f$ is also an epigraphical mapping of some normal integrand whenever $\epi f$ has an integrable selection; see \cite[p. 136 and 140]{tru91}. We define the {\em $\G$-conditional epi-expectation} of $f$ as the integrand $^\G f$ whose epigraphical mapping is the $\G$-conditional expectation of the epigraphical mapping of $f$. Some authors have called this the conditional expectation of $f$. In general, conditional epi-expectation and conditional expectation do not coincide (consider, e.g., a trivial $\sigma$-algebra $\G$ and $f(x,\omega)=\delta_{\{\xi(\omega)\}}(x)$ for a non-constant integrable random variable $\xi$), so we introduced a new term, conditional epi-expectation, to distinguish these two notions. 

\begin{definition}
Let $h$ be a normal integrand on $\reals^d$. If there exists an integrand $\hat h$ whose epigraph is the optional projection of $\epi h$, then we call $\hat h$ the {\em optional epi-projection} of $h$ and denote it by $\hat h= {^\O h}$. 
\end{definition}
\begin{definition}
Let $h$ be a normal integrand on $\reals^d$. If there exists an integrand $\hat h$ whose epigraph is the predictable projection of $\epi h$, then we call $\hat h$ the {\em predictable epi-projection} of $h$ and denote it by $\hat h= {^\P h}$. 
\end{definition}

In this section we study the existence and uniqueness of optional and predictable projections of convex-valued stochastic processes. Our proof is based on conjugacy arguments and on our existence results for projections of normal integrands. We point out that the techniques in \cite{hu77,wan01}, that are used to obtain the existence of conditional expectations of  random nonconvex sets, do not extend to the continuous time setting as such. Briefly, both approaches use the reference measure on the underlying space, but we do not have such measure on $\Omega\times\reals_+$. We leave it as an open question how to prove the existence of projections in the nonconvex case.

We assume from now on that $h$ is a convex normal integrand on $\reals^d$. By \cite[Theorem 14.50]{rw98}, the {\em conjugate integrand}
\[
h^*_t(y,\omega)=\sup_{x\in\reals^d}\{x\cdot y- h_t(x,\omega)\}
\]
of $h$ is a convex normal integrand on $\reals^d$. In other words, $h^*$ is an $(\omega,t)$-wise convex conjugate of $h$.
\begin{theorem}\label{thm:con}
Assume that $h$ is a convex normal integrand. If $h^*(v)^+$ is $\T$-integrable for some $\T$-integrable $v$, then $h$ and $\op h$ are convex normal integrands of class $\cD$. If $h^*(v)^+$ is $\T_p$-integrable for some $\T_p$-integrable $v$, then $h$ and $\pp h$ are convex normal integrands of class $\cP$.
\end{theorem}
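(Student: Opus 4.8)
Convexity and normality of $h$ are part of the hypotheses, so the only assertion about $h$ itself to be established is that it is of class $\cD$ (resp. $\cP$); the plan is to derive this from the Fenchel--Young inequality. Take the open balls $B^i=\{x\in\reals^d:|x|<i\}$ as the covering. Recalling that $h^*$ is a convex normal integrand, for $x\in B^i$ the definition of the conjugate and the Cauchy--Schwarz inequality give
\[
h_t(x,\omega)\ge x\cdot v_t(\omega)-h^*_t(v_t(\omega),\omega)\ge -i\,|v_t(\omega)|-h^*_t(v_t(\omega),\omega)^+=:m^i_t(\omega).
\]
Since $v$ is $\T$-integrable and $h^*(v)^+$ is $\T$-integrable by assumption, $m^i$ is $\T$-integrable, and as the $B^i$ exhaust $\reals^d$ this exhibits $h$ as an integrand of class $\cD$. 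By Theorem~\ref{thm:eup}, $\op h$ then exists and is a normal integrand of class $\cD$, so the whole remaining content is the convexity of $\op h$.

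For the convexity I would avoid arguing on $\op h$ directly, since it may take the value $+\infty$ and the integrations below would meet $\infty-\infty$ ambiguities. Instead I would reuse the Lipschitz regularizations from the proof of Theorem~\ref{thm:eup}, namely $h^{\nu,i}_t(x,\omega)=\inf_{x'}\{h_t(x',\omega)+\delta_{\bar B^i}(x')+\nu\|x-x'\|\}$, together with the additional observation that each $h^{\nu,i}$ is convex: taking the $B^i$ convex (balls), $h+\delta_{\bar B^i}$ is a convex normal integrand, and $h^{\nu,i}$ is its inf-convolution with the convex function $\nu\|\cdot\|$.

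The key step, which I expect to require the most care, is to show that the optional projection of a convex $K$-Lipschitz integrand $g$ of class $\cD$ is convex. By Lemma~\ref{lem:cp2}, $\op g$ is a $K$-Lipschitz integrand, so it suffices to verify midpoint convexity of $\op g_t(\cdot,\omega)$ outside an evanescent set (the inequality being automatic wherever $\op g_t(\cdot,\omega)\equiv+\infty$, and elsewhere $\op g_t(\cdot,\omega)$ is finite and Lipschitz). Assume it fails and apply Theorem~\ref{thm:ost} to the $\O\otimes\B(\reals^{2d})$-measurable set-valued process of pairs $(x,y)$ for which $\op g_t(\tfrac12(x+y),\omega)>\tfrac12\op g_t(x,\omega)+\tfrac12\op g_t(y,\omega)$, using as $\H$ the $\O$-discretely dense family of bounded optional $\reals^{2d}$-valued processes. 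This yields $\tau\in\T$ with $P(\{\tau<\infty\})>0$ and bounded optional $v,w$ realizing the strict inequality on $\{\tau<\infty\}$. Integrating over $\{\tau<\infty\}$ and using the defining identity $E[\one_{\{\tau<\infty\}}[\op g](u)_\tau]=E[\one_{\{\tau<\infty\}}g_\tau(u_\tau)]$ for $u\in\{v,\,w,\,\tfrac12(v+w)\}$ — all terms being integrable because $\op g$ is finite Lipschitz and $v,w$ are bounded — contradicts the pointwise convexity of $g$. Hence $\op g_t(\cdot,\omega)$ is midpoint convex and, being continuous where finite, convex. This section-theoretic passage from ``true at every bounded optional evaluation'' to ``true pointwise'' is the heart of the matter, exactly in the spirit of Lemmas~\ref{lem:dds} and \ref{lem:cp1}.

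Finally I would assemble the approximations as in the proof of Theorem~\ref{thm:eup}. Each $\op h^{\nu,i}$ is convex by the previous step, hence $k^i:=\sup_\nu\op h^{\nu,i}$ is convex and, as there, a normal integrand of class $\cD$ with $k^i=\op(h+\delta_{\bar B^i})$, the sequence $(k^i)_i$ nonincreasing and $\one_{B^i}\op h=\one_{B^i}k^i$. Given $x,y\in\reals^d$ and $\lambda\in[0,1]$, pick $i$ so large that $x,y\in B^i$; then $\lambda x+(1-\lambda)y\in B^i$ by convexity of $B^i$, so
\[
\op h(\lambda x+(1-\lambda)y)=k^i(\lambda x+(1-\lambda)y)\le\lambda k^i(x)+(1-\lambda)k^i(y)=\lambda\,\op h(x)+(1-\lambda)\,\op h(y),
\]
which gives convexity of $\op h$. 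The predictable statement follows along identical lines, invoking Theorem~\ref{thm:pst} in place of Theorem~\ref{thm:ost} and $\T_p$-integrability throughout.
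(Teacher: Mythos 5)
Your treatment of the class-$\cD$ claim and of the existence of $\op h$ (Fenchel inequality, then Theorem~\ref{thm:eup}) is exactly the paper's. For the convexity of $\op h$ you take a genuinely different route. The paper introduces the two auxiliary normal integrands $\tilde h_t(x^1,x^2,\omega)=h_t(\alpha x^1+(1-\alpha)x^2,\omega)$ and $\bar h_t(x^1,x^2,\omega)=\alpha h_t(x^1,\omega)+(1-\alpha)h_t(x^2,\omega)$ on $\reals^{2d}$, identifies their optional projections via Lemma~\ref{lem:bddc}, and concludes from the order preservation of Lemma~\ref{lem:cp1} that $\op{\tilde h}\le\op{\bar h}$, which is precisely convexity of $\op h$; this sidesteps the Lipschitz approximation entirely. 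You instead re-enter the proof of Theorem~\ref{thm:eup}, note that the regularizations $h^{\nu,i}$ are convex, prove convexity of the projection in the Lipschitz case by a direct section-theorem argument, and pass to the limit through $k^i=\sup_\nu\op h^{\nu,i}$ and the local identification $\one_{B^i}\op h=\one_{B^i}k^i$. The limit passage is sound (convexity survives suprema, and any two points lie in a common convex $B^i$), but the route is longer and its core step, as written, has a gap.

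The gap is the integrability claim in the midpoint-convexity step: you assert all terms are integrable ``because $\op g$ is finite Lipschitz and $v,w$ are bounded.'' Finiteness and Lipschitz continuity in $x$ give no control on integrability in $\omega$: take $g_t(x,\omega)=\eta_t(\omega)$ with $\eta\ge 0$ measurable but not $\T$-integrable; this is a convex $0$-Lipschitz integrand of class $\cD$, yet $\op g_\tau(v_\tau)=\op\eta_\tau$ need not be integrable, so all three expectations can equal $+\infty$ and the strict pointwise inequality yields no contradiction. The repair is the device the paper itself uses in Lemmas~\ref{lem:cp1} and~\ref{lem:cp2}: replace your set-valued process by $\Gamma^\alpha_t(\omega)=\{(x,y)\mid \alpha>\op g_t(\tfrac12(x+y),\omega)>\tfrac12\op g_t(x,\omega)+\tfrac12\op g_t(y,\omega)\}$ for rational $\alpha$ (some such $\dom\Gamma^\alpha$ is nonevanescent if midpoint convexity fails, since $\op g$ is finite on the failure set). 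After applying Theorem~\ref{thm:ost}, the upper bound $\alpha$ together with the class-$\cD$ lower bound makes every expectation finite, and the strict inequality combined with $E[g_\tau(\cdot)\one_{\{\tau<\infty\}}]=E[\op g_\tau(\cdot)\one_{\{\tau<\infty\}}]$ contradicts the pointwise convexity of $g$. With that truncation inserted, your argument is complete.
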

\begin{proof}
The Fenchel inequality
\[
h_t(x,\omega)+h^*_t(v_t(\omega),\omega)\ge x \cdot v_t(\omega)
\]
implies that $h$ is of class $\cD$. By Theorem~\ref{thm:eup}, $\op h$ exists and it is a normal integrand of class $\cD$. To prove that $\op h$ is convex, let $\alpha\in(0,1)$ and
\begin{align*}
\tilde h_t(x^1,x^2,\omega) &:=h_t(\alpha x^1+(1-\alpha)x^2,\omega),\\
\bar h_t(x^1,x^2,\omega) &:=\alpha h_t(x^1,\omega)+(1-\alpha)h_t(x^2,\omega).
\end{align*}
Both are normal integrands by \cite[Proposition 14.44 and Proposition 14.45]{rw98}. By Lemma~\ref{lem:bddc}, 
\begin{align*}
\op{\tilde h}_t(x^1,x^2,\omega) &=\op h_t(\alpha x^1+(1-\alpha)x^2,\omega),\\
\op{\bar h}_t(x^1,x^2,\omega) &=\alpha \op h_t(x^1,\omega)+(1-\alpha)\op h_t(x^2,\omega).
\end{align*}
Since $h$ is convex, $\tilde h\le \bar h$, so Lemma~\ref{lem:cp1} implies that $\op{\tilde h}\le \op{\bar h}$. Thus $\op h$ is convex as well. The predictable case is proved similarly.
\end{proof}

Recall that the recession function of a proper lower semicontinuous convex function $g$ is given by
\[
g^\infty(x)=\sup_{\lambda>0} \frac{g(\lambda x+\bar x)-g(\bar x)}{\lambda}
\]
which is independent of the choice of $\bar x\in\dom g$ \cite[Theorem 8.5]{roc70a}. For a convex normal integrand $h$, we denote the $(\omega,t)$-wise recession function of $h$ by $h^\infty$. By \cite[Exercise 14.54]{rw98}, $h^\infty$ is a convex normal integrand.
\begin{theorem}\label{thm:rec}
Assume that $h$ is a convex normal integrand. If $h(w)^+$ and $h^*(v)^+$ are $\T$-integrable for some optional $w$ and some $\T$-integrable $v$, then $\op [h^\infty]=[\op h]^\infty$. If $h(w)^+$ and $h^*(v)^+$ are $\T_p$-integrable for some predictable $w$ and some $\T_p$-integrable $v$, then $\pp [h^\infty]=[\pp h]^\infty$. 
\end{theorem}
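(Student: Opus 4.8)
The plan is to realize both $\op [h^\infty]$ and $[\op h]^\infty$ as the monotone limit of the optional projections of one and the same sequence of difference quotients. First I would fix the optional selection $w$ from the hypothesis. Since $h(w)^+$ is $\T$-integrable, $h(w)<+\infty$ outside an evanescent set, so $w$ is a selection of $\dom h$; moreover $[\op h](w)=\op[h(w)]<+\infty$ outside an evanescent set, so $w$ is also a selection of $\dom{\op h}$. Likewise $v$ selects $\dom{h^*}$, and the Fenchel inequality $h_t(x,\omega)\ge x\cdot v_t(\omega)-h^*_t(v_t(\omega),\omega)$ gives the affine minorant $h^\infty_t(x,\omega)\ge x\cdot v_t(\omega)$, whence $h^\infty$ is again of class $\cD$ and $\op [h^\infty]$ is well defined by Theorem~\ref{thm:eup}. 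By Theorem~\ref{thm:con}, $h$ and $\op h$ are convex normal integrands of class $\cD$, so $[\op h]^\infty$ is a convex normal integrand as well.

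Next I would use the representation of the recession function as an increasing supremum of difference quotients based at $w$. Setting
\[
g^n_t(x,\omega)=\frac{h_t(w_t(\omega)+nx,\omega)-h_t(w_t(\omega),\omega)}{n},
\]
convexity makes $(g^n)$ nondecreasing with $\sup_n g^n=h^\infty$, and the identical computation for $\op h$, whose domain contains $w$, gives $[\op h]^\infty=\sup_n d^n$, where $d^n$ is the analogous difference quotient of $\op h$. The heart of the argument is to identify $\op [g^n]$ with $d^n$. Each $g^n$ is the affine composite $x\mapsto \tfrac{1}{n}h_t(nx+w_t(\omega),\omega)$ minus the $x$-independent process $\tfrac{1}{n}h(w)$. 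Corollary~\ref{cor:eup}, applied with $B=nI$ and $b=w$, projects the composite to $x\mapsto\tfrac{1}{n}\op h_t(nx+w_t(\omega),\omega)$, while $\op[h(w)]=\op h_t(w_t(\omega),\omega)$ takes care of the constant term. Assembling these on an $\O$-discretely dense set by linearity of the projection, certified via Lemma~\ref{lem:bddc}, yields
\[
\op [g^n]_t(x,\omega)=\frac{\op h_t(w_t(\omega)+nx,\omega)-\op h_t(w_t(\omega),\omega)}{n}=d^n_t(x,\omega).
\]

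It then remains to interchange the projection with the supremum, for which I would invoke the monotone convergence theorem for projections, Theorem~\ref{thm:mon}. Its hypothesis asks for an $\O$-discretely dense family of optional processes $u$ along which $g^1(u)^-$ is $\T$-integrable; I would take $\H=\{u\text{ optional}:u+w\text{ bounded}\}$ and show it is $\O$-discretely dense exactly as in the proof of Corollary~\ref{cor:eup}, by truncating an arbitrary optional $\tilde u$ via $u^\nu=1_{A^\nu}\tilde u-1_{(A^\nu)^c}w$ on $A^\nu=\{|\tilde u+w|\le\nu\}$. For $u\in\H$ the argument $u+w$ stays in a fixed bounded set, so class $\cD$ of $h$ bounds $h(u+w)$ below by a $\T$-integrable $m^i$, and together with $\T$-integrability of $h(w)^+$ this forces $g^1(u)^-\le (m^i)^-+h(w)^+$ to be $\T$-integrable. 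Theorem~\ref{thm:mon} then gives $\op [h^\infty]=\sup_n\op [g^n]=\sup_n d^n=[\op h]^\infty$, and the predictable statement follows verbatim with $\T_p$, $\F_{\tau-}$ and the predictable versions of the cited results.

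The step I expect to demand the most care is precisely this verification of the hypothesis of Theorem~\ref{thm:mon}: the difference quotient $g^1$ admits no a priori integrable minorant over all of $\reals^d$, and it is the interplay of the class $\cD$ lower bound with the integrability of $h(w)^+$, localized to the family on which $u+w$ is bounded, that rescues integrability. A secondary point worth checking is that the single base point $w$ lies simultaneously in $\dom h$ and in $\dom{\op h}$, since otherwise neither recession representation at $w$ would be legitimate.
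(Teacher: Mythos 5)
Your proposal is correct and follows essentially the same route as the paper's proof: both express $h^\infty$ as the increasing limit of difference quotients based at $w$, identify the projection of each quotient via Corollary~\ref{cor:eup}, and pass to the limit with Theorem~\ref{thm:mon} using the $\O$-discretely dense family of optional $u$ for which $u+w$ is bounded. The paper states these steps tersely, whereas you spell out the integrability verification $g^1(u)^-\le (m^i)^-+h(w)^+$ and the fact that $w$ selects both $\dom h$ and $\dom \op h$; the underlying argument is identical.
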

\begin{proof}
By Theorem~\ref{thm:con}, $h$ is of class $\cD$. By \cite[Theorem 23.1]{roc70a}, the difference quotients
\[
h^\lambda_t(x,\omega):=\frac{h_t(\lambda x+w_t(\omega),\omega)-h_t(w_t(\omega),\omega)}{\lambda}
\]
define a nondecreasing sequence of integrands $(h^\lambda)_{\lambda=1}^\infty$. As in the proof of Corollary~\ref{cor:eup}, the set $\H$ of optional processes $\tilde w$, for which $\tilde w+w$ is bounded, is an $\O$-discretely dense set such that $h^1(\tilde w)^-$ is $\T$-integrable for each $\tilde w\in\H$.  Thus the result follows from Theorem~\ref{thm:mon} and Corollary~\ref{cor:eup}. The predictable case is proved similarly.
\end{proof}

For a convex normal $\F$-integrand $f:\Omega\times\reals^d\rightarrow\ereals$ on $\reals^d$, \cite[Theorem 2.1.2 and Corollary 2.1.1.1]{tru91} imply that
\begin{align}\label{eq:ddc}
E^{\G}\epi f=\epi ( [E^\G (f^*)]^* ) 
\end{align}
whenever there exists $\xi\in L^1(\F;\reals^d)$ and $\eta\in L^0(\G;\reals^d)$ such that $f(\xi)^+\in L^1$ and $f^*(\eta)^+\in L^1$. Note that \eqref{eq:ddc} means
\[
^\G f= [E^\G (f^*)]^*.
\]
Under analogous assumptions, the next theorem shows that epi-projections and projections of normal integrands are dual operations in the same sense. The conditions in the theorem cannot be omitted in general, as demonstrated after the theorem. As to their role for sufficiency, we refer to the proof of  \cite[Theorem 2.1.2]{tru91}.

\begin{theorem}\label{thm:epip}
Assume that $h$ is a convex normal integrand. If $h(w)^+$ and $h^*(v)^+$ are $\T$-integrable for some $\T$-integrable $w$ and some optional $v$, then $^\O h$ is given by 
\begin{equation}\label{eq:oo}
	{^\O h}=(\op(h^*))^*.
\end{equation}
If  $h(w)^+$ and $h^*(v)^+$ are $\T_p$-integrable for some $\T_p$-integrable $w$ and some predictable $v$, then $^\P h$ is given by
\begin{equation}\label{eq:op}
	{^\P h}=(\pp(h^*))^*.
\end{equation}
\end{theorem}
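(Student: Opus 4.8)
The plan is to prove \eqref{eq:oo} by identifying the set-valued optional projection of $\epi h$ with $\epi(g^*)$, where $g:=\op(h^*)$, and then reading off the epi-projection from its definition. Since $g$ will be a convex normal $\O$-integrand, its conjugate $g^*=(\op(h^*))^*$ is again a convex normal $\O$-integrand by \cite[Theorem 14.50]{rw98}, so $\Lambda:=\epi(g^*)$ is a closed convex-valued optional process and Lemma~\ref{lem:bst} is applicable. Thus it suffices to verify the static identity
\[
E^{\F_\tau}(\epi h)_\tau=\Lambda_\tau\qquad\text{for every bounded }\tau\in\T .
\]
Here $\Lambda_\tau=\epi((g_\tau)^*)$, because both conjugation and the passage to the epigraph are performed $(\omega,t)$-wise and hence commute with evaluation at $\tau$.

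First I would record the two facts that make Lemma~\ref{lem:bst} applicable. Applying Theorem~\ref{thm:con} to the convex normal integrand $h^*$, whose biconjugate is $h$, the assumption that $h(w)^+$ is $\T$-integrable for the $\T$-integrable process $w$ shows that $h^*$ and $g=\op(h^*)$ are convex normal integrands of class $\cD$; in particular $g$ exists and is optional. Moreover $\epi h$ is $\T$-integrable, since $(w,(h(w))^+)$ is a $\T$-integrable selection of $\epi h$: it is a selection because $h(w)\le (h(w))^+$, and it is $\T$-integrable by hypothesis.

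Now fix a bounded $\tau\in\T$, set $\G=\F_\tau$ and let $f=h_\tau$ be the $\F$-integrand $\omega\mapsto h_\tau(\cdot,\omega)$, so that $f^*=(h^*)_\tau$. The random variables $\xi=w_\tau$ and $\eta=v_\tau$ meet the hypotheses of \eqref{eq:ddc}: $w_\tau\in L^1(\F;\reals^d)$ with $f(\xi)^+=((h(w))^+)_\tau\in L^1$, while $v_\tau$ is $\F_\tau$-measurable with $f^*(\eta)^+=((h^*(v))^+)_\tau\in L^1$, both integrabilities being consequences of the assumed $\T$-integrability together with the boundedness of $\tau$. Hence \eqref{eq:ddc} yields
\[
E^{\F_\tau}\epi(h_\tau)=\epi([E^{\F_\tau}((h^*)_\tau)]^*).
\]
It remains to identify the $\F_\tau$-conditional expectation of the integrand $(h^*)_\tau$ with $g_\tau=(\op(h^*))_\tau$, which is the integrand-level analogue of the process identity $(\op v)_\tau=E^{\F_\tau}v_\tau$. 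For every bounded optional process $u$, Lemma~\ref{lem:bddc} and the definition of the optional projection of a process give
\[
g_\tau(u_\tau)=E^{\F_\tau}[(h^*)_\tau(u_\tau)],
\]
which is exactly the defining relation \eqref{eq:ce} of the conditional expectation evaluated at the $\F_\tau$-measurable selection $u_\tau$. Because $h^*$ is of class $\cD$ these $u_\tau$ lie in $\Lambda^{\F_\tau}_{(h^*)_\tau}$ and range over a $\F_\tau$-discretely dense family, so uniqueness of the conditional expectation (Corollary~\ref{cor:ece}) gives $g_\tau=E^{\F_\tau}((h^*)_\tau)$. Combining the last three displays shows $E^{\F_\tau}(\epi h)_\tau=\epi((g_\tau)^*)=\Lambda_\tau$, and Lemma~\ref{lem:bst} then gives $\op{(\epi h)}=\epi(g^*)$, that is ${^\O h}=(\op(h^*))^*$. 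The predictable identity \eqref{eq:op} is obtained verbatim, replacing $\F_\tau$, $\T$ and Theorem~\ref{thm:ost} by $\F_{\tau-}$, $\T_p$ and Theorem~\ref{thm:pst}, and using the predictable parts of Lemmas~\ref{lem:bst} and \ref{lem:bddc}.

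The main obstacle is the identification $g_\tau=E^{\F_\tau}((h^*)_\tau)$, i.e.\ the assertion that restricting the dynamic optional projection of an integrand to a stopping time reproduces the static conditional expectation of the restricted integrand. The delicate point is that $(h^*)_\tau$ need not be an $\F_\tau$-integrand, whereas its conditional expectation is, so one must upgrade the pointwise agreement of the two objects on the discretely dense family $\{u_\tau\}$ to equality as $\F_\tau$-integrands; this is precisely where the discrete density of $\{u_\tau\}$ and the uniqueness statements of Lemma~\ref{lem:cp1} and Corollary~\ref{cor:ece} are needed, and where the class $\cD$ property of $h^*$ (guaranteeing that bounded selections are admissible) is used. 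A secondary issue, already built into the hypotheses, is that the integrability conditions of \eqref{eq:ddc} must hold at every bounded $\tau$ at once, which is why a single $\T$-integrable $w$ and a single optional $v$ are assumed rather than data depending on $\tau$.
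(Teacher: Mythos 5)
Your proposal is correct and follows essentially the same route as the paper's own proof: reduce via Lemma~\ref{lem:bst} to the identity $E^{\F_\tau}\epi(h_\tau)=\epi([E^{\F_\tau}((h^*)_\tau)]^*)$ at bounded stopping times, obtain that identity from \eqref{eq:ddc}, and identify $E^{\F_\tau}((h^*)_\tau)$ with $(\op(h^*))_\tau$ through the correspondence between bounded $\F_\tau$-measurable random variables and evaluations of bounded optional processes at $\tau$. Your explicit verification of the hypotheses of \eqref{eq:ddc} via $\xi=w_\tau$, $\eta=v_\tau$ and of the $\T$-integrability of $\epi h$ via the selection $(w,h(w)^+)$ only makes explicit what the paper leaves implicit.
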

\begin{proof}
By Theorem~\ref{thm:con}, $\op (h^*)$ exists and it is a convex normal integrand. We will show that $[\op (h^*)]^*$ is the optional epi-projection of $h$. By Lemma~\ref{lem:bst}, $\epi [\op (h^*)]^*$ is the optional projection of $\epi h$ if and only if $\epi ([\op (h^*)]^*_\tau)$ is the $\F_\tau$-conditional expectation of $\epi (h_\tau)$ for every bounded $\tau\in\T$.

For a bounded $\tau\in\T$,
\[
(y,\omega)\mapsto h^*_{\tau(\omega)}(y,\omega)
\]
is a convex normal $\F$-integrand on $\reals^d$. We have that bounded $\F_\tau$-measurable random variables form an $\F_\tau$-discretely dense subset of $\Lambda^{\F_\tau}_{h^*_\tau}$ and bounded optional processes form an $\O$-discretely dense subset of $\Lambda^{\O}_{h^*}$. Moreover, $\eta$ is bounded and $\F_\tau$-measurable if and only if there exists bounded and optional process $\tilde{w}$ such that $\tilde{w}_\tau=\eta$, so $E^{\F_\tau} (h^*_\tau(\eta))=E^{\F_\tau} (h^*_\tau(\tilde{w}_\tau))=\op [h^*]_\tau(\tilde{w}_\tau)$ a.s., i.e., $E^{\F_\tau} (h^*_\tau)=\op [h^*]_\tau$. By \eqref{eq:ddc},
\[
E^{\F_\tau}(\epi h_\tau)= \epi ([E^{\F_\tau} (h^*_\tau)]^*)= \epi ([\op [h^*]_\tau]^*)= \epi ([\op [h^*]]_\tau^*),
\]
which proves that $[\op (h^*)]^*$ is the optional epi-projection of $h$. The predictable case is proved similarly.
\end{proof}

The assumptions in Theorem~\ref{thm:epip} cannot be dropped in general. For example, \eqref{eq:oo} and \eqref{eq:op} fail if $h_t(x,\omega)=\delta_{\{v_t(\omega)\}}(x)$ or $h_t(x,\omega)=\delta_{\{0\}}(x)-\alpha_t(\omega)$, where $v$ is real-valued, nonnegative and predictable that is not $\T_p$-integrable, and $\alpha_\tau$ is real-valued, nonnegative, nonintegrable and independent of $\F_\tau$ for each $\tau\in\T$.

The formulas in the following result reduce to Jensen's inequalities for optional and predictable integrands, since then $^\O h= h$ and $^\P h=h$, respectively, by Theorem~\ref{thm:epip}.
\begin{theorem}
Assume that $h$ is a convex normal integrand. If $h(\bar w)^+$ and $h^*(\bar v)$ are $\T$-integrable for some $\T$-integrable $\bar w$ and bounded optional $\bar v$, then
\begin{align}\label{eq:gjino}
\begin{split}
 ^\O h(\op w)\le \op [h(w)]
\end{split}
\end{align}
for every $\T$-integrable $w$. If $h(\bar w)^+$ and $h^*(\bar v)$ are $\T_p$-integrable for some $\T_p$-integrable $\bar w$ and bounded predictable $\bar v$, then
\begin{align}\label{eq:gjinp}
\begin{split}
 ^\P h(\pp w)\le \pp [h(w)]
\end{split}
\end{align}
for every $\T_p$-integrable $w$. 
\end{theorem}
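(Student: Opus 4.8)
The plan is to run everything through the dual identity ${^\O h}=(\op(h^*))^*$ from Theorem~\ref{thm:epip}, whose hypotheses are exactly those assumed here (taking $\bar w$ $\T$-integrable and $\bar v$ bounded optional, hence optional, so that both $h(\bar w)^+$ and $h^*(\bar v)^+$ are $\T$-integrable). First I would record the structural facts. Applying Theorem~\ref{thm:con} to $h^*$, whose conjugate is $(h^*)^*=h$ and for which $h(\bar w)^+$ is $\T$-integrable, I get that $\op(h^*)$ exists and is a convex normal integrand of class $\cD$; in particular $\op(h^*)>-\infty$ and $(\omega,t,y)\mapsto(\op(h^*))_t(y,\omega)$ is an $\O$-integrand. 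The Fenchel inequality $h_t(x,\omega)\ge x\cdot\bar v_t(\omega)-h^*_t(\bar v_t(\omega),\omega)$ together with $\bar v$ bounded and $h^*(\bar v)$ $\T$-integrable shows $h$ is of class $\cD$, so $h(w)^-$ is $\T$-integrable for every $\T$-integrable $w$; thus $w\in\Lambda^\O_h$ and $\op[h(w)]$ exists.

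Next I would argue by contradiction. Suppose \eqref{eq:gjino} fails, i.e. the domain of
\[
\Gamma_t(\omega)=\{y\in\reals^d\mid (\op w)_t(\omega)\cdot y-(\op(h^*))_t(y,\omega)>\op[h(w)]_t(\omega)\}
\]
is nonevanescent. Since $^\O h_t((\op w)_t)=(\op(h^*))^*_t((\op w)_t)=\sup_y\{(\op w)_t\cdot y-(\op(h^*))_t(y)\}$, the domain of $\Gamma$ is precisely the bad set, and because $\op(h^*)$ is an $\O$-integrand while $\op w$, $\op[h(w)]$ are optional, $\graph\Gamma$ is $\O\otimes\B(\reals^d)$-measurable. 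Applying Theorem~\ref{thm:ost} with $\H$ the $\O$-discretely dense family of bounded optional processes and $\epsilon$ small, I obtain $\tau\in\T$ with $P(\{\tau<\infty\})>0$ and a bounded optional $v$ with
\[
(\op w)_\tau\cdot v_\tau-(\op(h^*))_\tau(v_\tau)>\op[h(w)]_\tau\quad\text{on }\{\tau<\infty\}.
\]
As $\op(h^*)>-\infty$ and the left-hand side is finite, this forces $(\op(h^*))_\tau(v_\tau)$ to be finite on $\{\tau<\infty\}$.

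To turn this into a contradiction I must integrate the strict inequality, which requires cutting down to a set where every term is integrable; this truncation is the main obstacle, since the section theorem delivers a strict inequality but no integrability and the terms may a priori take the value $+\infty$. Because $\op w$ is $\reals^d$-valued and $(\op(h^*))_\tau(v_\tau)$, $\op[h(w)]_\tau$ are a.s. finite on $\{\tau<\infty\}$, the sets $A_n=\{\tau<\infty\}\cap\{|\op w_\tau|\le n,\ (\op(h^*))_\tau(v_\tau)\le n,\ \op[h(w)]_\tau\le n\}\in\F_\tau$ increase to $\{\tau<\infty\}$, so $P(A_n)>0$ for some $n$; the class-$\cD$ lower bounds of $\op(h^*)$ (on the bounded range of $v$) and of $\op[h(w)]=\op[h(w)^+]-\op[h(w)^-]$ then render all three terms integrable on $A_n$. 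Replacing $\tau$ by its restriction to $A_n$ (equal to $\tau$ on $A_n$ and $+\infty$ off $A_n$), I may assume $\{\tau<\infty\}=A_n$. Integrating over $A_n$ and using, valid for every $\tau\in\T$ and $F\in\F_\tau$, the identities $E[\one_F(\op w)_\tau\cdot v_\tau\one_{\{\tau<\infty\}}]=E[\one_F w_\tau\cdot v_\tau\one_{\{\tau<\infty\}}]$ (pulling out the bounded $\F_\tau$-measurable factor $v_\tau$), $E[\one_F(\op(h^*))_\tau(v_\tau)\one_{\{\tau<\infty\}}]=E[\one_F h^*_\tau(v_\tau)\one_{\{\tau<\infty\}}]$ (the defining property of $\op(h^*)$, with $v\in\Lambda^\O_{h^*}$), and $E[\one_F\op[h(w)]_\tau\one_{\{\tau<\infty\}}]=E[\one_F h_\tau(w_\tau)\one_{\{\tau<\infty\}}]$, I obtain
\[
E\big[(w_\tau\cdot v_\tau-h^*_\tau(v_\tau))\one_{A_n}\big]>E\big[h_\tau(w_\tau)\one_{A_n}\big],
\]
which contradicts the pointwise Fenchel inequality $w_\tau\cdot v_\tau-h^*_\tau(v_\tau)\le h_\tau(w_\tau)$. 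This proves \eqref{eq:gjino}.

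The predictable inequality \eqref{eq:gjinp} follows in the same way, replacing $\T$, $\O$, $\F_\tau$ and Theorem~\ref{thm:ost} by $\T_p$, $\P$, $\F_{\tau-}$ and Theorem~\ref{thm:pst}, and using the predictable halves of Theorem~\ref{thm:con} and Theorem~\ref{thm:epip}.
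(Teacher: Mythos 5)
Your proof is correct, but it takes a genuinely different route from the paper's. The paper never argues through the conjugate formula: it fixes $\tau\in\T$ and $A\in\F_\tau$, reduces to the case where $E[\one_A\one_{\{\tau<\infty\}}h_\tau(w_\tau)]<+\infty$, forms the restricted stopping time $\tau_A=\tau+\delta_A$, and patches $w$ with $\bar w$ off $\graph{\tau_A}$ to produce $\tilde w$ for which $(\tilde w,h(\tilde w))$ is a $\T$-integrable selection of $\epi h$ (the positive part of $h(\tilde w)$ is controlled by the hypothesis on $\bar w$ and the reduction, the negative part by the Fenchel inequality with $\bar v$). The defining property of the optional projection of $\epi h$ then gives $\op{h}(\op{\tilde w})$... more precisely $\leftexp{\O}{h}(\op{\tilde w})\le\op{[h(\tilde w)]}$ immediately, and evaluating at $\tau_A$ yields the claim; no section theorem and no integrability truncation are needed. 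You instead invoke Theorem~\ref{thm:epip} to write $\leftexp{\O}{h}=(\op{(h^*)})^*$ and realize the inequality as the Fenchel inequality pushed through the projection, using Theorem~\ref{thm:ost} to localize a violation and an exhaustion $A_n\uparrow\{\tau<\infty\}$ to restore integrability before integrating the strict inequality. Your bookkeeping is sound: the class-$\cD$ bounds for $h$, $h^*$ and their projections, the finiteness of $\op{w}_\tau$, and the identities transferring each of the three terms back to $w$, $h^*$ and $h$ on $A_n\in\F_\tau$ all check out, and the hypotheses of Theorems~\ref{thm:con} and \ref{thm:epip} are indeed implied by those of the present statement. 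What the paper's argument buys is brevity and the insight that the inequality is essentially the definition of the set-valued projection once a globally $\T$-integrable selection of $\epi h$ is manufactured; what yours buys is an explicit dual reading (the inequality is literally the projected Fenchel inequality) at the cost of the truncation machinery.
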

\begin{proof}
It suffices to show that, for every $\tau\in\T$ and $A\in\F_\tau$,
\begin{align*}
E[\one_A\one_{\{\tau<\infty\}}{^\O h}_\tau(\op w_\tau)]\le E[\one_A\one_{\{\tau<\infty\}}h_\tau(w_\tau)].
\end{align*}
Without loss of generality, we may assume that the right side is not $+\infty$. By \cite[Theorem 3.9]{hwy92}, $\tau_A:=\tau+\delta_A$ belongs to $\T$. Let $\tilde w:=1_{\graph {\tau_A}}w+1_{(\graph{\tau_A})^C}\bar w$ so that $(\tilde w,h(\tilde w))$ is a $\T$-integrable selection of $\epi h$. Indeed, $h(\tilde w)^+$ is $\T$-integrable by assumptions, and $h(\tilde w)^-$ is $\T$-integrable by the Fenchel inequality and by the assumptions on $\bar v$. Thus $^\O h(\op{\tilde w})\le\op[h(\tilde w)]$ by the definition of optional epi-projection, which implies the claim. The predictable case is proved similarly, where, by \cite[Theorem 3.29]{hwy92}, $\tau_A\in\T_p$ whenever $\tau\in\T_p$ and $A\in\F_{\tau-}$.
\end{proof}

The following theorem is our main result on optional and predictable projections of set-valued processes. Recall that a set-valued process is $\T$-integrable or $\T_p$-integrable if it has a $\T$-integrable or $\T_p$-integrable selection, respectively. 
\begin{theorem}\label{thm:eusp}
Assume that $\Gamma$ is convex-valued stochastic process. If $\Gamma$ is $\T$-integrable, then $\op\Gamma$ is given uniquely by
\[
 \delta_{\op\Gamma}=[\op(\delta_\Gamma^*)]^*.
\]
If $\Gamma$ is $\T_p$-integrable, then $\pp\Gamma$ is given uniquely by
\[
 \delta_{\pp\Gamma}=[\pp(\delta_\Gamma^*)]^*.
\]
\end{theorem}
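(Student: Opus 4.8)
The plan is to prove the optional case; the predictable case is entirely analogous with $\T$ replaced by $\T_p$, $\F_\tau$ by $\F_{\tau-}$, and the relevant predictable versions of the cited results. The key observation is that the support function $\delta_\Gamma^*$ is exactly the conjugate integrand of the indicator $\delta_\Gamma$, and that $\delta_\Gamma$ is a convex normal integrand whose epigraph encodes $\Gamma$ via $\Gamma_t(\omega)=\{x\mid \delta_\Gamma{}_t(x,\omega)\le 0\}=\dom\delta_\Gamma{}_t(\cdot,\omega)$. So I want to reduce the statement about set-valued projections to the already-established duality for epi-projections in Theorem~\ref{thm:epip}.

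First I would verify that the hypotheses of Theorem~\ref{thm:epip} are met for $h=\delta_\Gamma$. Since $\Gamma$ is $\T$-integrable it has a $\T$-integrable selection $w\in L^0(\Gamma)$, and for such a selection $h(w)=\delta_\Gamma(w)=0$ is trivially $\T$-integrable, so $h(w)^+$ is $\T$-integrable. For the conjugate side, note $h^*=\delta_\Gamma^*$ is the support function; taking $v\equiv 0$ gives $h^*(0)=\delta_\Gamma^*{}_t(0,\omega)=\sup_{x\in\Gamma_t(\omega)}0=0$ on $\dom\Gamma$ (and $-\infty$ off it, which does not affect the positive part), so $h^*(v)^+$ is $\T$-integrable for the optional process $v\equiv 0$. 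Hence Theorem~\ref{thm:epip} applies and yields $^\O(\delta_\Gamma)=(\op(\delta_\Gamma^*))^*$, i.e. the epigraph of $(\op(\delta_\Gamma^*))^*$ is the optional projection of $\epi\delta_\Gamma$.

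The next step is to translate the epigraphical statement into a statement about the sets themselves. I would use Lemma~\ref{lem:bst}: the epigraph of $(\op(\delta_\Gamma^*))^*$ is the optional projection of $\epi\delta_\Gamma$ precisely when, for every bounded $\tau\in\T$, its $\tau$-section is the $\F_\tau$-conditional expectation of $\epi(\delta_\Gamma)_\tau=\epi\delta_{\Gamma_\tau}$. Because conditional expectation of epigraphs corresponds to $\G$-conditional expectation of the underlying random sets (this is the relation $E^\G\epi f$ is an epigraph recalled before \eqref{eq:ddc}, applied to an indicator so that the level-zero set $=$ domain), the epigraph $E^{\F_\tau}\epi\delta_{\Gamma_\tau}$ is the epigraph of the indicator of $E^{\F_\tau}\Gamma_\tau$. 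Reading off domains then gives $\{x\mid (\op(\delta_\Gamma^*))^*_\tau(x)\le 0\}=E^{\F_\tau}\Gamma_\tau$ for every bounded $\tau$, and since $(\op(\delta_\Gamma^*))^*$ is convex normal of class $\cD$ by Theorem~\ref{thm:con}, it is in fact an indicator $\delta_{\op\Gamma}$ of a closed convex optional process whose $\tau$-sections are $E^{\F_\tau}\Gamma_\tau$. Lemma~\ref{lem:bst} in the other direction identifies this process with $\op\Gamma$, and its uniqueness follows from the uniqueness of $\op\Gamma$ as the smallest optional process in its defining property, together with the uniqueness in Theorem~\ref{thm:epip}.

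The main obstacle I expect is the careful bookkeeping in identifying the epigraph of the biconjugate with an indicator and matching its level-zero set to the conditional expectation of the random closed set. One must check that $(\op(\delta_\Gamma^*))^*$ is genuinely an indicator (its values are $0$ on a closed convex set and $+\infty$ elsewhere), which requires that $\op(\delta_\Gamma^*)$ be positively homogeneous, i.e. that optional projection commutes with the recession/homogeneity structure — here Theorem~\ref{thm:rec} on recession functions is the relevant tool, since a support function equals its own recession function, so $\delta_\Gamma^*=(\delta_\Gamma^*)^\infty$ and hence $\op(\delta_\Gamma^*)=\op((\delta_\Gamma^*)^\infty)=(\op(\delta_\Gamma^*))^\infty$ is positively homogeneous, forcing its conjugate to be an indicator. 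Verifying the integrability hypotheses of Theorem~\ref{thm:rec} for $h=\delta_\Gamma^*$ (using $\T$-integrability of $\Gamma$ and the selection $v\equiv 0$) is the delicate routine part, but no new idea is needed beyond the ones already assembled.
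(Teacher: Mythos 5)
Your proposal is correct and follows essentially the same route as the paper's proof: apply Theorem~\ref{thm:epip} to $h=\delta_\Gamma$ (checking the integrability hypotheses via a $\T$-integrable selection and $v\equiv 0$), use Theorem~\ref{thm:rec} to conclude that $\op(\delta_\Gamma^*)=[\op(\delta_\Gamma^*)]^\infty$ is positively homogeneous, and deduce that its conjugate is the indicator of a unique optional closed convex-valued process. Your additional detour through Lemma~\ref{lem:bst} and conditional expectations of epigraphs merely makes explicit the identification of that process with $\op\Gamma$, which the paper leaves implicit.
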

\begin{proof}
By Theorem~\ref{thm:epip} and Theorem~\ref{thm:rec}, ${^\O \delta_\Gamma}=[\op(\delta_\Gamma^*)]^*$, where $\op(\delta_\Gamma^*)=[\op(\delta_\Gamma^*)]^\infty$, so ${^\O \delta_\Gamma}$ is a conjugate of an optional positively homogeneous convex normal integrand, and consequently it is an indicator function of a unique optional closed convex-valued stochastic process $\op\Gamma$ \cite[Theorem 8.24]{rw98}. The predictable case is proved similarly.
\end{proof}

The following result is a simple extension of \cite[Lemma 3.3]{bp87} which was formulated for bounded processes.
\begin{corollary}\label{cor:jii}
If $\Gamma$ is an optional closed convex-valued stochastic process and $w\in L^0(\Gamma)$ is $\T$-integrable, then $\op w\in L^0(\Gamma)$. If $\Gamma$ is a predictable closed convex-valued stochastic process and $w\in L^0(\Gamma)$ is $\T_p$-integrable, then $\op w\in L^0(\Gamma)$. 
\end{corollary}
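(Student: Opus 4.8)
The plan is to reduce the assertion to the classical static fact that the conditional expectation of an integrable selection of a $\G$-measurable closed convex random set again lies in that set, and then to pass from this single-time statement to the process statement by the section-based comparison of Lemma~\ref{lem:inc}. Since $\op w$ is an optional process, the desired conclusion $\op w\in L^0(\Gamma)$ is precisely the smallness relation $\{\op w\}\subseteq\Gamma$ between the closed-valued optional singleton process $\{\op w\}$ and $\Gamma$. By Lemma~\ref{lem:inc} this holds if and only if $\op w_\tau\in\Gamma_\tau$ almost surely for every bounded $\tau\in\T$, so it suffices to verify this single inclusion at each bounded stopping time.

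First I would fix a bounded $\tau\in\T$. Because $w$ is $\T$-integrable and $\tau$ is bounded, the defining relation of the optional projection of the process $w$ gives $\op w_\tau=E^{\F_\tau}[w_\tau]$ almost surely. Evaluating the selection $w\in L^0(\Gamma)$ at $\tau$ yields $w_\tau\in\Gamma_\tau$ almost surely, and $w_\tau$ is integrable; moreover, as $\Gamma$ is optional and convex-valued, $\Gamma_\tau$ is an $\F_\tau$-measurable closed convex random set by the Castaing representation recorded after Theorem~\ref{thm:pst}.

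The heart of the matter is then the static inclusion $E^{\F_\tau}[w_\tau]\in\Gamma_\tau$ almost surely, which I would establish by a support-function argument in the spirit of \cite[Lemma~3.3]{bp87}. For each deterministic $y\in\reals^d$ one has $y\cdot w_\tau\le\sigma_{\Gamma_\tau}(y)$ pointwise, where $\sigma_{\Gamma_\tau}(y)=\sup_{x\in\Gamma_\tau}y\cdot x$ is $\F_\tau$-measurable. Taking $\F_\tau$-conditional expectations, and using that $\sigma_{\Gamma_\tau}(y)$ is $\F_\tau$-measurable while its negative part is dominated by the integrable $(y\cdot w_\tau)^-$, gives $y\cdot E^{\F_\tau}[w_\tau]\le\sigma_{\Gamma_\tau}(y)$ almost surely. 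Letting $y$ range over a countable dense subset of $\reals^d$ and discarding a null set, the representation $\Gamma_\tau(\omega)=\{x\mid y\cdot x\le\sigma_{\Gamma_\tau(\omega)}(y)\ \forall y\}$ of the closed convex set then shows $E^{\F_\tau}[w_\tau](\omega)\in\Gamma_\tau(\omega)$ for almost every $\omega$. Combined with the previous paragraph this yields $\op w_\tau\in\Gamma_\tau$ almost surely, and Lemma~\ref{lem:inc} delivers $\op w\in L^0(\Gamma)$. (Alternatively, the same static fact upgraded to the equality $E^{\F_\tau}\Gamma_\tau=\Gamma_\tau$ shows via Lemma~\ref{lem:bst} that $\op\Gamma=\Gamma$, whence $\op w\in L^0(\op\Gamma)=L^0(\Gamma)$ by the definition of $\op\Gamma$ together with Theorem~\ref{thm:eusp}.)

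The predictable case is entirely analogous: one replaces $\T$ by $\T_p$ and $\F_\tau$ by $\F_{\tau-}$, so that $\Gamma_\tau$ is $\F_{\tau-}$-measurable and $\pp w_\tau=E^{\F_{\tau-}}[w_\tau]$ for bounded $\tau\in\T_p$, and invokes the predictable half of Lemma~\ref{lem:inc}. The only genuine obstacle is the static inclusion $E^{\F_\tau}[w_\tau]\in\Gamma_\tau$ for merely integrable, rather than bounded, $w_\tau$; this is exactly the extension of \cite[Lemma~3.3]{bp87} beyond bounded processes, and the two points requiring care are the $\F_\tau$-measurability of $\sigma_{\Gamma_\tau}(y)$ and the one-sided integrability needed to take conditional expectations monotonically in the inequality $y\cdot w_\tau\le\sigma_{\Gamma_\tau}(y)$.
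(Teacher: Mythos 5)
Your route is genuinely different from the paper's. The paper disposes of this corollary in one line: since $\delta_\Gamma^*(0)=0$, Theorem~\ref{thm:epip} applies (with $v=0$ bounded optional and $w$ the given selection) and gives ${^\O \delta_{\Gamma}}=[\op(\delta_\Gamma^*)]^*=\delta_\Gamma$ because $\delta_\Gamma^*$ is already optional, after which the Jensen-type inequality \eqref{eq:gjino} yields $\delta_\Gamma(\op w)\le\op[\delta_\Gamma(w)]=0$. You instead go back to first principles: reduce to bounded stopping times via Lemma~\ref{lem:inc} applied to the singleton process $\{\op w\}$, identify $\op w_\tau$ with $E^{\F_\tau}[w_\tau]$, and prove the static inclusion by convex duality. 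That is a legitimate, more self-contained strategy, and the reduction steps (measurability of $\Gamma_\tau$ with respect to $\F_\tau$, the identity $\op w_\tau=E^{\F_\tau}[w_\tau]$ for bounded $\tau$, the equivalence furnished by Lemma~\ref{lem:inc}) are all sound.

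The gap is in the static step. Testing membership of $E^{\F_\tau}[w_\tau](\omega)$ in $\Gamma_\tau(\omega)$ against a \emph{fixed} countable dense set $D\subseteq\reals^d$ of directions fails when $\Gamma_\tau(\omega)$ is unbounded: the support function $y\mapsto\delta^*_{\Gamma_\tau(\omega)}(y)$ is then $+\infty$ outside the barrier cone, which may be a lower-dimensional cone meeting $D$ only at the origin. For instance, if $\Gamma_\tau(\omega)$ is a half-space $\{x\mid a\cdot x\le 0\}$ whose normal $a$ lies on no ray through a point of $D$, the inequalities $y\cdot x\le\delta^*_{\Gamma_\tau(\omega)}(y)$ for $y\in D$ are vacuous and exclude nothing; and lower semicontinuity of the support function points the wrong way for passing from $D$ to all of $\reals^d$. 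Since the corollary is precisely the extension beyond the bounded case, this is the case that matters. The standard repair is a \emph{random} separating direction: on the $\F_\tau$-measurable event $\{E^{\F_\tau}[w_\tau]\notin\Gamma_\tau\}$, a measurable selection argument produces an $\F_\tau$-measurable unit vector $y(\omega)$ with $y(\omega)\cdot E^{\F_\tau}[w_\tau](\omega)>\delta^*_{\Gamma_\tau(\omega)}(y(\omega))$; then $y\cdot w_\tau\le\delta^*_{\Gamma_\tau}(y)$ pointwise, $y\cdot w_\tau$ is integrable, $\delta^*_{\Gamma_\tau}(y)$ is $\F_\tau$-measurable with integrable negative part, and conditioning yields a contradiction. (Alternatively, a Castaing representation of the closure of the barrier cone of $\Gamma_\tau$ supplies countably many measurable directions that do suffice, using that the support function is continuous on the relative interior of its domain.) With that replacement your argument goes through; note that your parenthetical alternative via Lemma~\ref{lem:bst} rests on the same static fact and so inherits the same gap.
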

\begin{proof}
Assume that $\Gamma$ is optional. Since $\delta^*_{\Gamma}(0)=0$, Theorem~\ref{thm:epip} gives $^\O \delta_{\Gamma}=\delta_{\Gamma}$, so $\op w\in L^0(\Gamma)$ by \eqref{eq:gjino}. The predictable case is proved similarly.
\end{proof}

\section{Projections of convex-valued integrands}\label{sec:svi}

Let $F$ be a {\em set-valued $\S$-integrand} from $\Omega\times\reals_+\times\reals^m$ to $\reals^n$ in the sense that 
\[
\Gamma^F_t(\omega) :=\{(x,u)\in\reals^m\times\reals^n \mid  u\in F_t(x,\omega)\}
\]
defines an $\S$-measurable set-valued stochastic process. It is called {\em closed-valued} if, outside an evanescent set, $F_t(\cdot,\omega)$ is a closed-valued mapping. Likewise, $F$ is called {\em inner semicontinuous} (isc) if,  outside an evanescent set, $F_t(\cdot,\omega)$ is isc. Moreover, $F$ is called {\em optional} if $\Gamma^F$ is so. The properties {\em convex-valued, graph-convex, outer semicontinuous}, and {\em predictable} are defined similarly. Recall that a set-valued mapping from a topological space to another is {\em inner semicontinuous} (isc) if preimages of open sets are open, and it is {\em outer semicontinuous} (osc) if its graph is closed.

Analogs of optional and predictable projections for a class of isc integrands are given after the following preparatory lemma. More general cases without continuity assumptions are left for further research.

\begin{lemma}\label{lem:isc}
A convex-valued mapping $F$ from $\Omega\times\reals_+\times\reals^m$ to $\reals^n$ is an isc convex-valued $\S$-integrand if and only if
\[
h_t(x,y,\omega):=\delta_{F_t(x,\omega)}^*(y)
\]
is a normal $\S$-integrand on $\reals^m\times\reals^n$.
\end{lemma}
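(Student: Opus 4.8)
The plan is to prove both directions of the equivalence by exploiting the duality between a convex-valued isc mapping and the support function of its values, and then reducing measurability of the set-valued object to normality of the associated integrand. The key structural fact is that for a closed convex set $C\subseteq\reals^n$, the support function $\delta_C^*$ recovers $C$ via $C=\{u\mid u\cdot y\le\delta_C^*(y)\ \forall y\}$, so $F_t(x,\omega)$ and the function $y\mapsto h_t(x,y,\omega)=\delta_{F_t(x,\omega)}^*(y)$ carry the same information once we know $F$ is convex-valued and closed-valued. I would therefore first observe that inner semicontinuity of a convex-valued mapping, together with the closedness needed for the support function to determine the set, lets me pass back and forth between $\Gamma^F$ and $\epi h$.

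First I would treat the direction assuming $F$ is an isc convex-valued $\S$-integrand. The goal is to show $h$ is a normal $\S$-integrand on $\reals^m\times\reals^n$, i.e.\ that $(\omega,t)\mapsto\epi h_t(\cdot,\cdot,\omega)$ is closed-valued and $\S$-measurable. Closedness of each $\epi h_t(\cdot,\cdot,\omega)$ follows because a support function $\delta_C^*$ is lower semicontinuous and, jointly in $(x,y)$, lower semicontinuity of $(x,y)\mapsto\delta_{F_t(x,\omega)}^*(y)$ will come from the inner semicontinuity of $F_t(\cdot,\omega)$: isc of $F$ makes $x\mapsto\delta_{F_t(x,\omega)}^*(y)$ lower semicontinuous for fixed $y$, and the supremum structure $\delta_C^*(y)=\sup_{u\in C}u\cdot y$ combined with isc gives joint lower semicontinuity. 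For $\S$-measurability I would use the Castaing representation \cite[Theorem 14.5]{rw98}: isc measurability of $F$ yields a countable family of $\S$-measurable selections $(u^k)$ with $F_t(x,\omega)=\cl\{u^k_t(x,\omega)\}$ on the domain, whence $h_t(x,y,\omega)=\sup_k\, u^k_t(x,\omega)\cdot y$ is a countable supremum of $\S\otimes\B(\reals^m\times\reals^n)$-measurable functions and hence $\S\otimes\B$-measurable; combined with the established lower semicontinuity this yields normality via \cite[Corollary 14.34]{rw98} or \cite[Proposition 14.44]{rw98}.

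For the converse, assuming $h$ is a normal $\S$-integrand, I would recover $F$ as
\[
F_t(x,\omega)=\{u\in\reals^n\mid u\cdot y\le h_t(x,y,\omega)\ \forall y\in\reals^n\},
\]
which is the biconjugate/closed-convex-hull reconstruction valid because $h_t(x,\cdot,\omega)$ is a support function and thus sublinear and closed. I would then verify that $\Gamma^F$ is $\S$-measurable by writing its graph as a countable intersection over a dense set of directions $y$ of the $\S\otimes\B$-measurable sets $\{(x,u):u\cdot y\le h_t(x,y,\omega)\}$, using lower semicontinuity in $y$ to reduce the uncountable condition to a countable one, and I would check inner semicontinuity of $F_t(\cdot,\omega)$ from the continuity properties encoded in normality of $h$.

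The main obstacle I expect is the careful handling of inner semicontinuity and the domain where $F$ is empty or where $\delta_{F_t(x,\omega)}^*$ takes the value $+\infty$ (i.e.\ unbounded directions): the correspondence between $F$ and $h$ is cleanest for nonempty closed convex sets, so I must track how isc interacts with the places where $F_t(x,\omega)=\emptyset$ (giving $h\equiv-\infty$) and ensure the reconstruction and the measurability arguments survive the reduction of an uncountable family of linear inequalities to a countable one. The interplay of inner semicontinuity (an openness-of-preimages condition) with lower semicontinuity of the support function in the $x$-variable is the delicate point, since isc and osc enter support functions asymmetrically, and I would need \cite[Proposition 14.44]{rw98} and the pointwise convex-analytic facts to bridge them.
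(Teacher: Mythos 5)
Your high-level plan --- pass between $F$ and $h$ via support-function duality, get joint lower semicontinuity of $h$ from inner semicontinuity of $F$, then establish measurability --- matches the paper's in spirit, and the lower semicontinuity part is fine (the paper cites \cite[Lemma A.2]{bv88} for exactly this). The gap is in your measurability steps, and it is precisely the gap the paper is engineered to avoid: the $\sigma$-algebras of interest ($\O$ and $\P$) are \emph{not} complete with respect to any reference measure, and the paper deliberately introduces none. Your forward direction ends by combining joint measurability of $h$ with lower semicontinuity and invoking \cite[Corollary 14.34]{rw98}; but that implication (lsc $+$ jointly measurable $\Rightarrow$ normal) is exactly the one that requires a complete $\sigma$-algebra, as the paper warns in Section~\ref{sec:nisp}. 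The Castaing step has a prior problem too: Castaing applies to a closed-valued measurable mapping, whereas you need measurability of the \emph{sliced} mapping $(\omega,t,x)\mapsto F_t(x,\omega)$ with respect to $\S\otimes\B(\reals^m)$, which is an intersection of $\Gamma^F$ with the moving set $\{x\}\times\reals^n$ --- and measurability of such intersections is again completeness-sensitive; moreover $F$ is only assumed convex-valued, not closed-valued. Even granting the representation, $h=\sup_k u^k\cdot y$ is a supremum of integrands that are merely measurable (not Carath\'eodory) in $x$, so \cite[Proposition 14.44]{rw98} does not yield normality of the supremum.

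The converse has two further problems. First, showing that the graph of $\Gamma^F$ is an $\S\otimes\B$-measurable \emph{set} does not give that $\Gamma^F$ is an $\S$-measurable set-valued \emph{process} (inverse images of open sets in $\S$) without completeness. Second, the reduction of ``$u\cdot y\le h_t(x,y,\omega)$ for all $y$'' to a countable dense set of directions fails: lower semicontinuity of $h$ in $y$ gives $\liminf_\nu h_t(x,y^\nu,\omega)\ge h_t(x,y,\omega)$, which is the wrong inequality for that reduction, and when $\dom h_t(x,\cdot,\omega)$ is lower-dimensional a fixed countable dense subset of $\reals^n$ may miss it entirely (take $F_t(x,\omega)=\reals^{n-1}\times[0,1]$, whose support function is finite only on a line). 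The paper circumvents all of this by writing $h=-\inf_u p$ with $p_t(x,y,u,\omega)=-u\cdot y+\delta_{F_t(x,\omega)}(u)$, verifying directly from the definition that $\epi p$ is $\S$-measurable, and then obtaining measurability of $\hypo h$ and of $\epi h$ as images under the continuous (single-valued, hence isc) coordinate projection via \cite[Theorem 14.13(a)]{rw98} and \cite[Theorem 14.3(h)]{rw98} --- operations that preserve measurability without any completeness hypothesis. You would need to replace your measurability arguments by something of this kind.
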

\begin{proof}
By \cite[Lemma A.2]{bv88}, inner semicontinuity of $F$ implies that $h_t(\cdot,\cdot,\omega)$ is jointly lsc. We have $h_t(x,y,\omega)=-\inf_u p_t(x,y,u,\omega)$ for
\[
p_t(x,y,u,\omega)=-u\cdot y+\delta_{F_t(x,\omega)}(u).
\]
Using $\S$-measurability of $\Gamma^F$, one may verify straight from the definition that $\epi p$ is $\S$-measurable mapping from $\Omega\times\reals_+$ to $\reals^m\times\reals^n\times\reals^n\times\reals$. Since the projection mapping from $\reals^m\times\reals^n\times\reals^n\times\reals$ to $\reals^m\times\reals^n\times\reals$ is isc (it is single-valued and continuous), we get measurability of $\hypo h$ from measurability of $\epi p$ and \cite[Theorem 14.13(a)]{rw98}. Here
\[
(\hypo h)_t(\omega):=\{(x,y,\alpha)\mid h_t(x,y,\omega)\ge\alpha\}.
\]
Evidently, the mapping defined by $\Gamma_t(\omega):=\{(x,y,\alpha)\mid h_t(x,y,\omega)>\alpha\}$ is measurable as well. Since  $\Gamma^{-1}(O)=\{(t,\omega)\mid \epi h_t(\omega)\subset O\}^C$ for any open $O$, we get measurability of $\epi h$ from \cite[Theorem 14.3(h)]{rw98}. The converse can be proved similarly using the orthogonal projection of the epigraph of
\[
\tilde p_t(x,y,u,\omega)=-u\cdot y+h_t(x,y,\omega)
\]
and the fact that \cite[Theorem 14.3(h)]{rw98} is applicable here as well. Indeed, the proof of part (h) does not use closed-valuedness of the set-valued mapping.
\end{proof}  

Note that $F(w)$ defines an $\S$-measurable set-valued stochastic process for every $\S$-measurable stochastic process $w$. This follows from \cite[Theorem 14.13(a)]{rw98} by choosing $M$ as the projection from $\reals^n\times\reals^m$ to $\reals^n$ and $S_t(\omega)=[\{w_t(\omega)\}\times\reals^n]\cap\Gamma^F_t(\omega)$. We denote 
\begin{align*}
\Lambda^\O_F &=\{w\mid w\text{ is optional and there exists $\T$-integrable }u\in L^0(F(w))\},\\
\Lambda^\P_F &=\{w\mid w\text{ is predictable and there exists $\T_p$-integrable }u\in L^0(F(w))\}.
\end{align*}

\begin{theorem}
Let $F$ be an inner-semicontinuous closed convex-valued integrand and assume that there exists a stochastic process $m$ such that
\[
\inf \{|u|\mid u\in F_t(x,\omega)\}\le m_t(\omega)|x|\quad\forall (\omega,t,x)\text{ such that } F_t(x,\omega)\ne\emptyset.
\]
If $m$ is $\T$-integrable, then there exists a unique optional inner-semicontinuous closed convex-valued integrand $\op F$ such that 
\[
[\op F](w) =\op[F(w)]\quad w\in\Lambda^\O_F.
\]
If $m$ is $\T_p$-integrable, then there exists a unique predictable inner-semicontinuous closed convex-valued integrand $\pp F$ such that 
\[
[\pp F](w) =\op[F(w)]\quad w\in\Lambda^\P_F.
\]
\end{theorem}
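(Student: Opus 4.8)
The plan is to reduce the set-valued statement to a single projection of a normal integrand through support functions. By Lemma~\ref{lem:isc}, the inner semicontinuity together with the closed convex-valuedness of $F$ is equivalent to
\[
h_t(x,y,\omega):=\delta^*_{F_t(x,\omega)}(y)
\]
being a normal $\F\otimes\B(\reals_+)$-integrand on $\reals^m\times\reals^n$, and I would project this $h$. The growth bound is precisely what makes $h$ of class $\cD$: whenever $F_t(x,\omega)\neq\emptyset$ there is $u\in F_t(x,\omega)$ with $|u|\le m_t(\omega)|x|$, so on any bounded box $|x|,|y|\le R$ one has the Fenchel-type estimate $h_t(x,y,\omega)\ge u\cdot y\ge -m_t(\omega)R^2$, with $mR^2$ being $\T$-integrable. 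Hence Theorem~\ref{thm:eup} produces a normal $\O$-integrand $\op h$ of class $\cD$.

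Next I would show that $\op h$ remains convex and positively homogeneous in the dual variable $y$, so that it is a genuine support function in $y$ for each $(x,\omega)$. Positive homogeneity follows from Corollary~\ref{cor:eup} applied with the constant invertible optional map $\diag(I_m,\lambda I_n)$ and $b=0$: it gives $\op h_t(x,\lambda y,\omega)$ as the projection of $h(\cdot,\lambda\,\cdot)$, while $h(\cdot,\lambda\,\cdot)=\lambda h$ forces the projection to equal $\lambda\op h$, so $\op h_t(x,\lambda y,\omega)=\lambda\op h_t(x,y,\omega)$ for $\lambda>0$. Convexity in $y$ I would obtain by the comparison device of Theorem~\ref{thm:con}: setting $\tilde h_t(x,y^1,y^2,\omega):=h_t(x,\alpha y^1+(1-\alpha)y^2,\omega)$ and $\bar h_t(x,y^1,y^2,\omega):=\alpha h_t(x,y^1,\omega)+(1-\alpha)h_t(x,y^2,\omega)$, Lemma~\ref{lem:bddc} identifies $\op{\tilde h}$ and $\op{\bar h}$ with the corresponding combinations of $\op h$, and since $h$ is convex in $y$ we have $\tilde h\le\bar h$, so $\op{\tilde h}\le\op{\bar h}$ by Lemma~\ref{lem:cp1}. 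Defining $\op F_t(x,\omega)$ to be the closed convex set whose support function is $\op h_t(x,\cdot,\omega)$, we then have $\op h=\delta^*_{\op F}$, and the converse implication of Lemma~\ref{lem:isc} shows that $\op F$ is an optional inner-semicontinuous closed convex-valued integrand.

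It remains to verify the characterizing identity and uniqueness. For $w\in\Lambda^\O_F$ and any bounded optional $\reals^n$-valued process $v$, the defining property of the projection $\op h$ yields $[\op h](w,v)=\op[h(w,v)]$, that is, $\delta^*_{[\op F](w)}(v)=\op[\delta^*_{F(w)}(v)]$. Since the bounded $v$ are $\O$-discretely dense, Lemma~\ref{lem:dds} promotes this to an identity of support-function integrands, which by the support-function characterization established in the proof of Theorem~\ref{thm:eusp} (namely $\delta^*_{\op\Gamma}=\op[\delta^*_\Gamma]$) is exactly $[\op F](w)=\op[F(w)]$. For uniqueness, if $G$ is another optional inner-semicontinuous closed convex-valued integrand with $[G](w)=\op[F(w)]$ for all $w\in\Lambda^\O_F$, then running the same computation backwards shows $\delta^*_G$ to be an optional projection of $h$ along the discretely dense family, whence $\delta^*_G=\op h=\delta^*_{\op F}$ by the uniqueness in Theorem~\ref{thm:cp}; support functions determine closed convex sets, so $G=\op F$. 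The predictable assertion is obtained verbatim with $\T,\O,\cD$ replaced by $\T_p,\P,\cP$ and the optional objects by their predictable counterparts.

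The main obstacle is the middle paragraph: the general projection of normal integrands in Theorem~\ref{thm:eup} preserves neither convexity nor positive homogeneity automatically, so both must be recovered by hand in the dual variable to guarantee that $\op h$ is truly a support function and hence that $\op F$ is well defined. Intertwined with this is the bookkeeping on the set $\{(\omega,t,x):F_t(x,\omega)=\emptyset\}$, where $h\equiv-\infty$ and the class-$\cD$ estimate degenerates; there I would work on the effective domain $\dom F$, on which the processes of $\Lambda^\O_F$ take their values, so that reconjugation returns the correct (possibly empty) values of $\op F$ off that domain.
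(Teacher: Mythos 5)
Your proposal follows essentially the same route as the paper: dualize $F$ into the support-function integrand $h=\delta^*_F$ via Lemma~\ref{lem:isc}, use the growth bound to put $h$ in class $\cD$, project by Theorem~\ref{thm:eup}, recover convexity and positive homogeneity of $\op h$ in the $y$-argument so that it is again a support function, and identify $[\op F](w)$ with $\op[F(w)]$ through Theorem~\ref{thm:eusp}. The only substantive point you leave implicit is the verification that $(w,v)\in\Lambda^\O_h$ for $w\in\Lambda^\O_F$ and bounded optional $v$ --- the paper obtains this from the Fenchel-type bound $h_t(w_t(\omega),y,\omega)\ge u_t(\omega)\cdot y$ for a $\T$-integrable selection $u\in L^0(F(w))$ --- while your scaling trick via Corollary~\ref{cor:eup} for positive homogeneity is a harmless variant of the paper's device of taking $y^2=0$ in the comparison of $\tilde h$ and $\bar h$.
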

\begin{proof}
We define
\[
h_t(x,y,\omega)=\delta_{F_t(x,\omega)}^*(y).
\]
We get from the existence of $m$ that 
\[
h_t(x,y,\omega)\ge - m_t(\omega)|x||y|,
\]
which in conjunction with  Lemma~\ref{lem:isc} implies that $h$ is a normal integrand of class $\cD$. Thus, by Theorem~\ref{thm:eup}, there exists a unique optional projection $\op h$ of $h$ that is a normal integrand of class $\cD$.  

The projection $\op h$ inherits the convexity and positively homogeneity of $h$ in the $y$-argument. Both properties can be shown by defining, for $\alpha\in\reals_+$,
\begin{align*}
\tilde h_t(x,y^1,y^2,\omega) &:=h_t(x,\alpha y^1+(1-\alpha)y^2,\omega),\\
\bar h_t(x,y^1,y^2,\omega) &:=\alpha h_t(x,y^1,\omega)+(1-\alpha)h_t(x,y^2,\omega)
\end{align*}
and proceeding as in the proof of Theorem~\ref{thm:con}. Convexity follows by considering $\alpha\in(0,1)$ whereas positive homogeneity follows by considering $y^2=0$. Thus, $\op h_t(x,\cdot,\omega)$ is a support function of some closed convex set $\tilde F_t(x,\omega)$. By Lemma~\ref{lem:isc}, $\tilde F$ is an optional inner-semicontinuous closed convex-valued integrand. To finish the proof, we show that we may set $\op F:=\tilde F$.

Given a $w\in\Lambda^\O_F$, the existence of $\T$-integrable $u\in L^0(F(w))$ implies that 
\[
\bar h_t(y,\omega):=h_t(w_t(\omega),y,\omega) \ge u_t(\omega)\cdot y\quad\forall (\omega,t,y).
\]
By \cite[Proposition~14.45]{rw98}, $\bar h$ thus defines a normal integrand of class $\cD$. The above lower bound also gives that $(w,v)\in\Lambda^\O_h$ for every bounded optional $v$. Thus
\[
\op {\bar h}(v)= \op h(w,v)
\]
for every bounded optional $v$, which implies, by Theorem~\ref{thm:eusp}, that 
\[
\op[F(w)]=\tilde F(w).
\]
The predictable case is proved similarly.
\end{proof}

For set-valued integrands, the analogs of epi-projections are the projections of the associated graphical mappings. Indeed, if $F$ is the epigraphical mapping of a normal integrand, these definitions give exactly the epi-projections. The following theorem is an immediate corollary of Theorem~\ref{thm:eusp}.
\begin{theorem}
Let $F$ be an outer semicontinuous graph-convex integrand. If there exists $\T$-integrable $w\in\Lambda^\O_F$, then there exists an optional outer semicontinuous graph-convex integrand $^\O F$ such that $\Gamma^{{^\O F}}=\op \Gamma^F$ and it is given uniquely by
\[
 \delta_{\Gamma^{{^\O F}}}=[\op(\delta_{\Gamma^F}^*)]^*.
\]
If there exists $\T_p$-integrable $w\in\Lambda^\P_F$, then there exists a predictable outer semicontinuous graph-convex integrand $^\P F$ such that $\Gamma^{{^\P F}}=\pp \Gamma^F$ and it is given uniquely by
\[
 \delta_{\Gamma^{{^\P F}}}=[\pp(\delta_{\Gamma^F}^*)]^*.
\]
\end{theorem}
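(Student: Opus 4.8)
The plan is to read the statement off directly from Theorem~\ref{thm:eusp} applied to the graphical process $\Gamma^F$, after checking that the hypotheses transfer correctly and that the resulting projection is again the graph of an integrand of the required type. The key observation is the dictionary supplied by the definitions at the start of this section: since $F$ is graph-convex and outer semicontinuous, the process $\Gamma^F$ on $\reals^m\times\reals^n$ is closed convex-valued outside an evanescent set, and since $F$ is a set-valued integrand, $\Gamma^F$ is $\F\otimes\B(\reals_+)$-measurable. Thus $\Gamma^F$ is exactly the type of object to which Theorem~\ref{thm:eusp} applies.

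First I would verify the $\T$-integrability of $\Gamma^F$. By hypothesis there is a $\T$-integrable $w\in\Lambda^\O_F$; unwinding the definition of $\Lambda^\O_F$, this means $w$ is optional and admits a $\T$-integrable selection $u\in L^0(F(w))$, i.e. $u_t(\omega)\in F_t(w_t(\omega),\omega)$. The pair $(w,u)$ is then an optional, $\T$-integrable selection of $\Gamma^F$, so $\Gamma^F$ is $\T$-integrable. Theorem~\ref{thm:eusp} now yields that $\op\Gamma^F$ exists, is the unique smallest optional closed convex-valued process with the defining property, and satisfies $\delta_{\op\Gamma^F}=[\op(\delta_{\Gamma^F}^*)]^*$.

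It remains to translate $\op\Gamma^F$ back into a set-valued integrand. I would define $^\O F$ to be the mapping whose graphical process is $\Gamma^{{^\O F}}:=\op\Gamma^F$, i.e. $({^\O F})_t(x,\omega)=\{u\mid (x,u)\in(\op\Gamma^F)_t(\omega)\}$. Because $\op\Gamma^F$ is $\O$-measurable, closed-valued, and convex-valued, the dictionary runs in reverse: $^\O F$ is an optional, outer semicontinuous, graph-convex integrand, and the conjugacy identity is exactly the displayed one with $\Gamma^{{^\O F}}$ in place of $\op\Gamma^F$. The predictable case is identical, replacing $\T$, $\Lambda^\O_F$, $\op$, and the optional half of Theorem~\ref{thm:eusp} by their counterparts $\T_p$, $\Lambda^\P_F$, $\pp$, and the predictable half. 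The one point requiring a word of care -- and the closest thing to an obstacle in an otherwise immediate corollary -- is the bookkeeping in the first step: one must observe that membership $w\in\Lambda^\O_F$ is what supplies the $\T$-integrable companion $u$, so that it is the concatenated process $(w,u)$, and not $w$ alone, that serves as the selection making $\Gamma^F$ $\T$-integrable in the precise sense demanded by Theorem~\ref{thm:eusp}.
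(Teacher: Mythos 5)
Your proposal is correct and follows exactly the route the paper intends: the paper presents this result as an immediate corollary of Theorem~\ref{thm:eusp}, and your write-up supplies precisely the bookkeeping that makes it so, namely that a $\T$-integrable $w\in\Lambda^\O_F$ together with its $\T$-integrable companion $u\in L^0(F(w))$ yields the $\T$-integrable selection $(w,u)$ of $\Gamma^F$, and that osc plus graph-convexity of $F$ translate into closed convex values of $\Gamma^F$ so that Theorem~\ref{thm:eusp} applies and the projection translates back into an integrand of the same type.
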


\subsubsection*{Acknowledgements}

The authors thank the anonymous referees whose comments and suggestions helped substantially to improve the article. In particular, the last section on set-valued integrands was inspired by their insights.

\bibliographystyle{plain}
\bibliography{sp}

\end{document}